\documentclass[12pt,oneside]{article}

\usepackage[english]{babel}
\usepackage[letterpaper,top=2cm,bottom=2cm,left=3cm,right=3cm,marginparwidth=1.75cm]{geometry}
\usepackage{amsmath,amssymb,amsthm,mathtools}
\usepackage{times}
\usepackage{indentfirst}
\usepackage[colorlinks=true,allcolors=blue]{hyperref}

\newtheorem{theorem}{Theorem}
\newtheorem{lemma}{Lemma}
\newtheorem{proposition}{Proposition}
\newtheorem{corollary}{Corollary}
\theoremstyle{definition}
\newtheorem{definition}{Definition}
\newtheorem{remark}{Remark}
\newtheorem{example}{Example}
\newtheorem{problem}{Problem}
\newcommand{\F}{\mathbb{F}}

\newcommand{\rank}{\operatorname{rank}}
\newcommand{\nullity}{\operatorname{nullity}}
\newcommand{\sfp}{\operatorname{sfp}}

\newcommand{\Ker}{\operatorname{Ker}}

\newcommand{\diag}{\operatorname{diag}}

\begin{document}

\title{A New Sufficient Condition for Oriented Graphs Determined by Their Generalized Skew Spectra}
\author{\small Limeng Lin$^{{\rm a}}$\quad\quad Wei Wang$^{{\rm b}}$\quad\quad Wei Wang$^{{\rm a}}$\thanks{Corresponding author: wang\_weiw@xjtu.edu.cn}\quad\quad Hao Zhang$^{{\rm c}}$
\\
{\footnotesize $^{{\rm a}}$School of Mathematics and Statistics, Xi'an Jiaotong University, Xi'an 710049, P. R. China}
\\
{\footnotesize $^{{\rm b}}$School of Mathematics, Physics and Finance, Anhui Polytechnic University, Wuhu 241000, P. R. China}
\\
{\footnotesize $^{{\rm c}}$School of Mathematics, Hunan University, Changsha 410082, P. R. China}}
\date{}
\maketitle

\begin{abstract}
Characterizing graphs uniquely determined by their spectra (DS) is a core open problem in spectral graph theory. While this problem has been extensively investigated for simple undirected graphs, it remains relatively underexplored for oriented graphs. For a simple undirected graph $G$ equipped with an orientation $\sigma$, the corresponding oriented graph $\Sigma=(G,\sigma)$ is the digraph obtained by orienting each edge of $G$ according to $\sigma$. An oriented graph $\Sigma$ is said to be \emph{determined by its generalized skew spectrum} (DGSS) if every oriented graph sharing the same generalized skew spectrum is isomorphic to $\Sigma$.

This paper develops a new sufficient criterion for recognizing DGSS controllable oriented graphs, which applies to a much broader family of graphs than previously known results. Let $S$ be the skew-adjacency matrix of $\Sigma$, $W(\Sigma)=[e,Se,\ldots,S^{n-1}e]$, and $d_n$ the last invariant factor of $W(\Sigma)$. For each odd prime $p$, we define the polynomial $\Phi_p(\Sigma;x)=\gcd(\chi(S;x),\chi(S+J;x))$ over the finite field $\mathbb{F}_p$, which is invariant under generalized skew cospectrality.

By analyzing the square-free part of $\Phi_p(\Sigma;x)$ and the associated $p$-main polynomial, we establish a DGSS sufficient condition under the square-free assumption on $d_n$. The proposed criterion allows higher $p$-nullity and recovers the square-free determinant criterion of Qiu, Wang and Wang~(2019) as a special case. We further provide illustrative examples to verify the wider applicability of our new condition and to highlight the role of the compatibility constraints on the irreducible factors of $\Phi_p(\Sigma;x)$.


\end{abstract}

\noindent\textbf{Keywords:} Oriented graph; Skew-adjacency matrix; Generalized skew spectrum; Skew-walk matrix; Smith normal form; Finite field

\noindent\textbf{Mathematics Subject Classification:} 05C50

\section{Introduction}
Let $G$ be a simple graph with vertex set $V(G)=\{v_1,v_2,\ldots,v_n\}$ and edge set $E(G)$. An \emph{oriented graph} $\Sigma=(G,\sigma)$ is obtained from $G$ by assigning an orientation to each edge according to $\sigma$. The graph $G$ is called the \emph{underlying graph} of $\Sigma$.

  The skew-adjacency matrix of $\Sigma$ is the matrix $S=S(\Sigma)=(s_{ij})$ defined by
\begin{equation}\label{eq:skew-adjacency}
 s_{ij}=\begin{cases}
 1, & \text{if }(v_i,v_j)\text{ is an arc of }\Sigma,\\
 -1, & \text{if }(v_j,v_i)\text{ is an arc of }\Sigma,\\
 0, & \text{otherwise}.
 \end{cases}
\end{equation}
Thus $S^{\rm T}=-S$. The \emph{skew spectrum} of $\Sigma$ consists of all the eigenvalues (including the multiplicities) of $S(\Sigma)$; see \cite{CaversEtAl,ShaderSo}. We write $\chi(S;x)=\det(xI-S)$ for the characteristic polynomial of $S$. 
Let $J$ denote the all-one matrix of order $n$. Two oriented graphs $\Sigma$ and $\Delta$ are said to be  \emph{generalized skew cospectral} if $
 \chi(S(\Sigma);x)=\chi(S(\Delta);x)$ and
 $\chi(S(\Sigma)+J;x)=\chi(S(\Delta)+J;x).$
An oriented graph $\Sigma$ is said to be \emph{determined by its generalized skew spectrum} ($\mathrm{DGSS})$ if every oriented graph with the same generalized skew spectrum is isomorphic to $\Sigma$.

An oriented graph $\Sigma$ is called \emph{self-converse} if it is isomorphic to its converse $\Sigma^{\rm T}$, obtained from $\Sigma$ by reversing the direction of every arc. Since $S(\Sigma^{\rm T})=-S(\Sigma)$, the oriented graphs $\Sigma$ and $\Sigma^{\rm T}$ have the same generalized skew spectrum. It follows that every $\mathrm{DGSS}$ oriented graph must be self-converse.

The \emph{skew-walk matrix} of $\Sigma$ is defined by
\[
 W=W(\Sigma):=\bigl[e,Se,\ldots,S^{n-1}e\bigr],
\]
where $e$ is the all-one vector.  We call $\Sigma$ \emph{controllable} if  $W(\Sigma)$ is nonsingular.

A fundamental problem in spectral graph theory is to characterize which graphs are
determined by their spectra. In general, proving that a graph is $\mathrm{DS}$
is difficult, and only a limited number of graph families are known to possess
this property. We refer to \cite{vanDamHaemers1,vanDamHaemers2} and the
references therein for surveys of related results.

To obtain stronger spectral
characterizations, Wang and Xu~\cite{WangXu} studied graph determination
from the perspective of the generalized spectrum. The generalized spectral determination problem has since been extensively
studied for both graphs and oriented graphs. For simple graphs, Wang~\cite{Wang 2013,Wang2017} established
criteria based on the arithmetic structure of the walk matrix, including
square-freeness conditions on its determinant. This approach was subsequently
extended to several related settings, including generalized $Q$-spectral
characterization, almost controllable graphs, and signed trees; see
\cite{Qiu Q,almost,ref5}. For oriented graphs, Qiu et al.~\cite{QiuWangWang}
obtained an analogous criterion. Let $\mathcal{F}_n$ denote the set of all
controllable oriented graphs on $n$ vertices.

\begin{theorem}[\cite{QiuWangWang}]\label{thm:old}
Let $\Sigma\in \mathcal{F}_n$ be self-converse. If $
2^{-\lfloor n/2\rfloor}\det W(\Sigma),$
which is always an integer, is odd and square-free, then $\Sigma$ is $\mathrm{DGSS}$.
\end{theorem}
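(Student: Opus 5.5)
The plan follows the standard Wang--Xu framework, adapted to skew-symmetric matrices.

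Suppose $\Delta$ is generalized skew cospectral with $\Sigma$. The first step is to show that there is a unique rational orthogonal matrix $Q$ with $Q^{\rm T}S(\Sigma)Q=S(\Delta)$ and $Qe=e$.
\begin{itemize}
\item Generalized cospectrality gives $e^{\rm T}S^k e$ equal for both graphs. One gets this from $\chi(S+tJ;x)=\chi(S;x)\bigl(1+t\,e^{\rm T}(xI-S)^{-1}e\bigr)$, which shows that $\chi(S+tJ)$ for all $t$ is determined by $\chi(S)$ and $\chi(S+J)$.
\item Hence $W(\Sigma)^{\rm T}W(\Sigma)=W(\Delta)^{\rm T}W(\Delta)$, so $\Delta$ is controllable as well.
\item Put $Q=W(\Delta)W(\Sigma)^{-1}$. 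This $Q$ is rational and orthogonal, satisfies $Qe=e$, and gives $Q^{\rm T}SQ=S(\Delta)$.
\end{itemize}

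Let $\ell(Q)$ be the level of $Q$, i.e.\ the least positive integer $\ell$ with $\ell Q$ integral. If $\ell=1$, then $Q$ is an integral orthogonal matrix with $Qe=e$, hence a permutation matrix, so $\Delta\cong\Sigma$. The whole task is therefore to show $\ell=1$. Since $\ell Q=W(\Delta)\,\ell W(\Sigma)^{-1}$, $\ell$ divides the last invariant factor $d_n$ of $W(\Sigma)$, and hence divides $\det W$.

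Next, we need an arithmetic lemma on $W$: $2^{\lfloor n/2\rfloor}$ divides $\det W$. Indeed, modulo $2$ we have $S\equiv A$ with $A$ symmetric, and $e^{\rm T}A^k e$ is even for $k\ge1$. This gives $\rank_2 W\le\lceil n/2\rceil$, and the Smith normal form then contributes at least $2^{\lfloor n/2\rfloor}$. The hypothesis then says that the odd part of $\det W$ is square-free. For the prime $2$, the self-converse hypothesis must be used: the map $Q$ is only determined up to the sign ambiguity $S\mapsto -S$. Equivalently, we must show that $2\nmid\ell$, via a mod-$2$ argument showing that $\ell Q$ cannot be nonzero modulo $2$ while satisfying $(\ell Q)^{\rm T}(\ell Q)=\ell^2 I$ and $(\ell Q)e=\ell e$. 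I would mimic Wang's argument for the $2$-part: show that $\ell$ is odd using $W^{\rm T}W\equiv$ a structured matrix mod $2$ and the fact that the $2$-part of $\det W$ is exactly $2^{\lfloor n/2\rfloor}$. The precise exponent should force the $2$-primary invariant factors to all be small and compatible with oddness of $\ell$.

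For an odd prime $p\mid\ell$, with $p\mid\det W$ and $p^2\nmid\det W$, set $\hat Q=\ell Q$ and choose a column $u$ of $\hat Q$ that is nonzero mod $p$. Then:
\begin{itemize}
\item $u^{\rm T}u\equiv0$ and $W(\Sigma)^{\rm T}u\equiv0 \pmod p$, using $\hat Q^{\rm T}W(\Delta)=\ell W(\Sigma)$.
\item Since $\rank_p W=n-1$, $u$ spans $\ker W^{\rm T}$ over $\F_p$.
\item $\ker W^{\rm T}$ is $S$-invariant, because $W^{\rm T}Su=-(SW)^{\rm T}u$ and $SW$ has columns $S^k e$, with the last one a combination of earlier columns. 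Hence $Su\equiv\lambda u$ for some $\lambda\in\F_p$.
\item Skew-symmetry gives $\lambda u^{\rm T}u=u^{\rm T}Su=0$, consistent with this.
\end{itemize}
The contradiction should come from lifting to $\mathbb{Z}/p^2$: following Wang, show that the square-free condition (a single factor $p$) forces $u^{\rm T}u\not\equiv0\pmod{p^2}$ after suitable adjustment, while orthogonality forces $u^{\rm T}u\equiv0\pmod{p^2}$ since $\hat Q^{\rm T}\hat Q=\ell^2I$. Concretely, I would take $z$ integral with $W^{\rm T}z\equiv0\pmod p$, write $Sz=\lambda z+p y$, and use $\det W=p\cdot(\text{unit})$ to show $z^{\rm T}z\not\equiv0\pmod p$, unless $p^2\mid\det W$. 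This is done via the identity $W^{\rm T}W$ being a Hankel matrix in $e^{\rm T}S^k e$ and $(S-\lambda I)$ relations.

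The main obstacle I expect is this last step, showing $z^{\rm T}z\not\equiv0\pmod p$ from square-freeness. The skew setting makes $\lambda$ potentially nonzero with $\lambda^2$ a nonresidue, so $\lambda$ may be forced to lie in $\F_{p^2}$. I would first handle $\lambda=0$ directly, and otherwise pass to the factor $x^2-\lambda^2$ using the symmetric matrix $S^2$.
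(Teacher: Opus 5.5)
Your setup is fine: the existence and uniqueness of $Q$, the divisibility $\ell(Q)\mid d_n$, the bound $\rank_2W\le\lceil n/2\rceil$, and the mod-$p$ facts about a column $u$ of $\ell Q$. One slip: $Q=W(\Delta)W(\Sigma)^{-1}$ satisfies $QS(\Sigma)Q^{\rm T}=S(\Delta)$, so you want its transpose.

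\textbf{The prime $2$.} You give no mechanism here. The fact $v_2(d_n)=1$ only yields $v_2(\ell)\le1$. Self-converseness plays no role at $2$, and for each $\Delta$ the matrix $Q$ is unique, so there is no sign ambiguity to resolve. The missing idea is that the mod-$2$ annihilator of $e$ is a cospectral invariant.
\begin{itemize}
\item The odd-indexed coefficients of $\chi(S;x)$ vanish. So $\chi(S;x)$ ($n$ even) or $x\chi(S;x)$ ($n$ odd) is congruent mod $2$ to $\psi_\Sigma(x)^2$, where $\deg\psi_\Sigma=\lceil n/2\rceil$ and $\psi_\Sigma$ depends only on $\chi$.
\item $M=\psi_\Sigma(S)$ is symmetric mod $2$ with $M^2\equiv0$. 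Over $\F_2$, $(M^2)_{ii}=\sum_j m_{ij}^2=(Me)_i$, so $\psi_\Sigma(S)e\equiv0$.
\item Your own count shows the hypothesis leaves exactly $\lfloor n/2\rfloor$ even invariant factors, each exactly divisible by $2$. Hence $\rank_2W(\Sigma)=\deg\psi_\Sigma$ and $\psi_\Sigma=f_2(\Sigma;x)$.
\item For $\Delta$: $f_2(\Delta;x)\mid\psi_\Delta=\psi_\Sigma$, while $v_2(\det W(\Delta))=\lfloor n/2\rfloor$ caps $\nullity_2W(\Delta)$. This forces $f_2(\Delta;x)=f_2(\Sigma;x)$.
\item Take a common monic integral lift $f$ of degree $r$. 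Then $\bar W=[e,\dots,S^{r-1}e,f(S)e/2,\dots,S^{n-r-1}f(S)e/2]$ is integral for both graphs and $Q^{\rm T}\bar W(\Sigma)=\bar W(\Delta)$. Since $\det\bar W(\Sigma)=2^{-\lfloor n/2\rfloor}\det W(\Sigma)$ is odd, $\ell$ is odd.
\end{itemize}

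\textbf{Odd primes.} Here the decisive step is the one you defer, and the plan as formulated cannot close it.
\begin{itemize}
\item Orthogonality already gives $u^{\rm T}u\equiv0\pmod p$. So if $z^{\rm T}z\not\equiv0\pmod p$ were available, you would be done without any lifting. The whole difficulty is the isotropic case ($z$ in the column space of $W$ mod $p$), and you give no reason to exclude it.
\item The mod-$p^2$ condition alone is empty there. Any isotropic $z$ can be replaced by $z+pw$ with $(z+pw)^{\rm T}(z+pw)\equiv0\pmod{p^2}$ and still $W^{\rm T}(z+pw)\equiv0\pmod p$. Also, the usual extra relation $u^{\rm T}Su\equiv0\pmod{p^2}$ is vacuous for skew $S$.
\item Most importantly, self-converseness belongs here, not at $2$. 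If $P^{\rm T}SP=-S$ with $P$ a permutation, then $Pe=e$ and $P$ preserves $K=\Ker_p(W^{\rm T})$. So $S|_K$ is similar to $-S|_K$, and $\lambda=0$.
\item The case $\lambda\neq0$ must be ruled out this way rather than ``handled''. Note also that $\lambda\in\F_p$ automatically, since $K$ is a line over $\F_p$, so the worry about $\F_{p^2}$ is misplaced.
\item Why it must be ruled out: $\Sigma^{\rm T}$ is always a generalized cospectral mate. An argument excluding $p$ without knowing $\lambda=0$ would show that the arithmetic condition alone forces $\Sigma\cong\Sigma^{\rm T}$.
\end{itemize}

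Once $\lambda=0$, the paper needs no $z^{\rm T}z$ analysis at all.
\begin{itemize}
\item $\chi(S|_K;x)=x=\sfp(\Phi_p(\Sigma;x))$.
\item The factorization $\chi(S;x)=\chi(S|_K;x)\chi(-S|_{K^\perp};x)$, together with $\star$-symmetry, gives $f_p(\Sigma;x)=\chi(S|_{K^\perp};x)=\chi(S;x)/x$.
\item The same holds for $\Delta$: it has the same $\Phi_p$, and $\nullity_pW(\Delta)=1$ by the valuation count.
\item The same $\bar W$ construction, now dividing by $p$, gives $p\nmid\ell$.
\end{itemize}
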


The condition in Theorem~\ref{thm:old} can be expressed in terms of the Smith normal form. Recall that every nonsingular integral matrix $M$ is integrally equivalent to a unique diagonal matrix $\diag(d_1,d_2,\ldots,d_n)$ with positive entries satisfying $d_i\mid d_{i+1}$ for $1\leq i<n$. This diagonal matrix is called the \emph{Smith normal form} of $M$, and $d_1,d_2,\ldots,d_n$ are its invariant factors. 

Let $p$ be a prime. We write $\mathbb{F}_p$ for the finite field of order $p$ and
$\overline{\mathbb{F}}_p$ for its algebraic closure. For an oriented graph
$\Sigma$ with skew-adjacency matrix $S$, the \emph{invariant polynomial} of
$\Sigma$ over $\mathbb{F}_p$ is defined by
\[
\Phi_p(\Sigma;x)
:=
\gcd\bigl(\chi(S;x),\chi(S+J;x)\bigr)
\in \mathbb{F}_p[x].
\]
The polynomial $\Phi_p(\Sigma;x)$ is invariant under generalized skew
cospectrality; see Section~\ref{proof thm} for more details.

Let $f\in\mathbb{F}_p[x]$ be a monic polynomial with irreducible factorization $
f=\prod_{i=1}^r f_i^{a_i},$
where $f_1,\ldots,f_r$ are distinct monic irreducible polynomials and
$a_1,\ldots,a_r$ are positive integers. The \emph{square-free part} of $f$ is
defined by $
\sfp(f):=\prod_{i=1}^r f_i$. For an integral matrix $M$, we denote by $\rank_p M$ and
$\nullity_p M$ its rank and nullity over $\mathbb{F}_p$, respectively.

Recently, Wang et al.~\cite{WangWangZhu} improved the method of
\cite{Wang 2013} by extending it to the case $\rank_p W(G)\leq n-1$, for every prime $p$ of the last invariant factor $d_n (W(G))$.
Their approach uses the Smith normal form of the walk matrix and a new
invariant polynomial. Their result is stated as follows.

\begin{theorem}[\cite{WangWangZhu}]\label{wang}
Let $G$ be a controllable simple graph, and let $d_n$ be the last invariant
factor of $W=W(G)$. Suppose that $d_n$ is square-free. If for each odd prime factor $p$ of $d_n$,
\begin{equation}
\deg\operatorname{sfp}(\Phi_p(G; x)) =\operatorname{nullity}_p W,
\end{equation}
then $G$ is $\mathrm{DGS}$.
\end{theorem}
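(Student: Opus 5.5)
The proof follows the standard Wang framework for generalized spectral characterization, refined at each prime $p\mid d_n$ by the invariant polynomial $\Phi_p$. If $H$ is generalized cospectral with $G$ and $G$ is controllable, then there is a unique rational orthogonal matrix $Q$ with $Qe=e$ and $Q^{\rm T}A(G)Q=A(H)$, namely $Q=W(H)W(G)^{-1}$. Let $\ell(Q)$ be the least positive integer with $\ell Q$ integral. Then $\ell(Q)\mid d_n$, and $G$ is DGS precisely when every such $Q$ is a permutation matrix, which holds when $\ell(Q)=1$. Since $d_n$ is square-free, it suffices to show that no prime $p\mid d_n$ divides $\ell(Q)$. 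The prime $p=2$ is handled by Wang's known argument for simple graphs: under controllability with $d_n$ square-free, $2\nmid\ell(Q)$. The statement's hypothesis accordingly concerns only odd primes.

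Fix an odd prime $p\mid d_n$ and suppose $p\mid\ell(Q)$. Write $\bar Q=\ell Q$; because $p^2\nmid d_n$, we may take $\ell=p$ locally. Then $\bar Q\not\equiv0\pmod p$. The columns $u$ of $\bar Q$ satisfy $u^{\rm T}u\equiv0$, $W^{\rm T}u\equiv0$, and in particular $e^{\rm T}u\equiv0$. The relation $A\bar Q=\bar Q A(H)$ makes the column space $\mathcal U$ of $\bar Q$ over $\mathbb F_p$ an $A$-invariant subspace of $\Ker W^{\rm T}$. The goal is to show that $\Ker_p W^{\rm T}$ contains no nonzero $A$-invariant totally isotropic subspace, which yields the contradiction.

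The key step uses the nullity hypothesis. The space $\Ker_p W^{\rm T}$ is $A$-invariant of dimension $\nullity_p W$. On it $A$ and $A+J$ act identically, because $J$ annihilates it ($e^{\rm T}v=0$). The characteristic polynomial of $A$ restricted to $\Ker W^{\rm T}$ therefore divides both $\chi(A)$ and $\chi(A+J)$ over $\mathbb F_p$, and hence divides $\Phi_p(G;x)$. I would show that the square-free-$d_n$ condition forces the restriction of $A$ to $\Ker W^{\rm T}$ to be cyclic, using Smith normal form arguments on $W$ modulo $p$. The characteristic polynomial of this restriction then equals its minimal polynomial. With the condition $\deg\sfp(\Phi_p)=\nullity_p W$, this makes the restriction semisimple with distinct irreducible factors, each of multiplicity one. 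Then $\Ker W^{\rm T}=\bigoplus_i V_i$, where each $V_i$ is irreducible for an irreducible factor $f_i$, and $\mathcal U$ is a direct sum of some of the $V_i$.

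It remains to show that no $V_i$ is totally isotropic. Since $A$ is symmetric, $V_i$ is orthogonal to $V_j$ unless $f_j$ equals the reciprocal-type partner of $f_i$, which for symmetric $A$ means $f_j=f_i$. So the form restricted to $V_i$ must be nondegenerate, provided the form on $\Ker W^{\rm T}$ is nondegenerate. Proving that nondegeneracy is the expected main obstacle. I would first try to show that an isotropic $A$-invariant subspace $V_i$ inside $\Ker W^{\rm T}$ would produce an extra vector $v$ with $W^{\rm T}v\equiv0$ and $v^{\rm T}v\equiv0$ lifting to a divisor $p^2$ of $d_n$. This is Wang's classical argument that $z^{\rm T}z\equiv0\pmod{p^2}$ forces $p^2\mid d_n$, generalized from nullity $1$ to each irreducible block. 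That contradicts square-freeness, so $p\nmid\ell(Q)$ for all $p$, and $G$ is DGS.
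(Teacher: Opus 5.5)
You have the reduction right: modulo $p$, the columns of $\ell Q$ span a nonzero $A$-invariant totally isotropic subspace $\mathcal U\subseteq K=\Ker_pW^{\rm T}$. The gap is the next step. The statement you need, that $K$ contains no such subspace (equivalently, that the form is nondegenerate on $K$), does not follow from the hypotheses. The classical implication you cite is also false in the form you state it. Take
\[
A=\begin{pmatrix}2&-1&0\\-1&2&1\\0&1&3\end{pmatrix},\qquad
W=[e,Ae,A^2e]=\begin{pmatrix}1&1&0\\1&2&7\\1&4&14\end{pmatrix},\qquad \det W=-7 .
\]
\begin{itemize}
\item Here $d_3=7$, and $K=\Ker_7W^{\rm T}$ is spanned by $z=(1,2,4)^{\rm T}$, with $Az\equiv0$ and $z^{\rm T}z=21\equiv0\pmod 7$.
\item Modulo $7$, $\chi(A;x)\equiv x^3$ and $\chi(A+J;x)\equiv x^2(x-3)$, so $\Phi_7=x^2$ and $\deg\sfp(\Phi_7)=1=\nullity_7W$.
\end{itemize}
So $K$ itself is a nonzero $A$-invariant totally isotropic subspace, although $p^2\nmid d_n$ and the degree--nullity condition holds. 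Moreover $z'=(15,2,4)^{\rm T}$ satisfies $W^{\rm T}z'\equiv0\pmod 7$ and $z'^{\rm T}z'=245\equiv0\pmod{49}$, yet $p^2\nmid d_n$.

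This $A$ is not an adjacency matrix. However, at an odd prime your planned argument uses only that $A$ is symmetric and integral, so it cannot establish the nondegeneracy. The level actually gives more than $\mathcal U$ retains. From $(\ell Q)^{\rm T}A^k(\ell Q)=\ell^2A(H)^k$, every column $u$ of $\ell Q$ satisfies $u^{\rm T}A^ku\equiv0\pmod{p^2}$. In the example, every $u\equiv cz$ with $c\not\equiv0$ has $u^{\rm T}Au\equiv21c^2\not\equiv0\pmod{49}$. Passing to a subspace over $\mathbb{F}_p$ throws away exactly this information.

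There is also an earlier gap. Cyclicity of $A|_K$, together with $\chi(A|_K)\mid\Phi_p$ and $\deg\chi(A|_K)=\deg\sfp(\Phi_p)$, does not make $A|_K$ semisimple. One could have $\chi(A|_K)=\varphi^2$ and $\Phi_p=\varphi^2\psi$ with $\deg\psi=\deg\varphi$. The missing ingredient is the reverse divisibility $\sfp(\Phi_p)\mid\chi(A|_K)$. If $A\eta=\lambda\eta$ and $(A+J)\xi=\lambda\xi$, then
\[
(e^{\rm T}\eta)(e^{\rm T}\xi)=\eta^{\rm T}J\xi=\eta^{\rm T}(\lambda I-A)\xi=\xi^{\rm T}(\lambda I-A)\eta=0,
\]
so one of $\eta,\xi$ is an eigenvector of $A$ lying in $K$. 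This gives $\chi(A|_K)=\sfp(\Phi_p)$ directly, and cyclicity is not needed.

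The paper's argument (given for the oriented analogue, and specializing directly to symmetric $A$) then avoids isotropy altogether:
\begin{itemize}
\item From $\chi(A)=\chi(A|_K)\chi(A|_{K^\perp})$ and $f_p=\chi(A|_{K^\perp})$ it obtains $f_p(G;x)=\chi(A;x)/\sfp(\Phi_p(G;x))$, which is invariant under generalized cospectrality.
\item The equality $|\det W(H)|=|\det W(G)|$ forces $\nullity_pW(H)=\nullity_pW(G)$, hence $f_p(H;x)=f_p(G;x)$.
\item Replacing the last $n-r$ columns of $W$ by $A^jf(A)e/p$ yields integral matrices with $Q^{\rm T}\bar W(G)=\bar W(H)$ and $p\nmid\det\bar W(G)$, so $p\nmid\ell(Q)$.
\end{itemize}
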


In this paper, we extend Theorem~\ref{wang} to oriented graphs. However, this extension is nontrivial. In the skew-adjacency setting, the degree-nullity condition alone is not sufficient, since new cospectral mates may arise when opposite roots belong to different irreducible factors. This motivates an additional condition concerning the interaction of irreducible factors under $x\mapsto -x$.
Moreover, we provide a more concise proof of the characteristic polynomial factorization over invariant subspaces, which applies to a more general class of matrices.
 We now state our main result.

\begin{theorem}\label{thm:main}
Let $\Sigma\in \mathcal{F}_n$, and let $d_n$ be
the last invariant factor of $W(\Sigma)$. Suppose that $d_n$ is square-free and $
 \rank_2 W(\Sigma)=\left\lceil\frac n2\right\rceil.$
For every odd prime divisor $p$ of $d_n$, assume that the following conditions
hold:

\noindent\textup{(i)}
\[
\deg\sfp(\Phi_p(\Sigma;x))=\nullity_p W(\Sigma);
\]

\noindent\textup{(ii)} write
\[
\sfp(\Phi_p(\Sigma;x))
=
\varphi_1(x)\cdots\varphi_s(x)
\]
as a product of distinct monic irreducible polynomials over $\mathbb{F}_p$. Then $
\gcd\bigl(\varphi_i(x),\varphi_j(-x)\bigr)=1$,
whenever $i\neq j$.

Then $\Sigma$ is $\mathrm{DGSS}$.
\end{theorem}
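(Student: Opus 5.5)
We follow the standard framework of Wang's method, adapted to skew matrices. Let $\Delta$ be an oriented graph generalized skew cospectral with $\Sigma$, with skew-adjacency matrix $T$. Because $\Sigma$ is controllable, the known result for oriented graphs gives a unique rational orthogonal matrix $Q$ with $Qe=e$ and $Q^{\rm T}SQ=T$, namely $Q=W(\Sigma)W(\Delta)^{-1}$ (equivalently $Q^{\rm T}=W(\Delta)W(\Sigma)^{-1}$). Write $\ell(Q)$ for the level of $Q$, the smallest positive integer $\ell$ such that $\ell Q$ is integral. It suffices to prove $\ell(Q)=1$: a rational orthogonal matrix with $Qe=e$ and integer entries is a permutation matrix, so $\Delta\cong\Sigma$. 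Since $\ell Q^{\rm T}W(\Sigma)=\ell W(\Delta)$ is integral, $\ell(Q)$ divides $d_n$. As $d_n$ is square-free, we may treat each prime divisor $p$ of $d_n$ separately and show $p\nmid\ell(Q)$.

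\textbf{The prime $p=2$.} Here we use the hypothesis $\rank_2 W(\Sigma)=\lceil n/2\rceil$. Over $\mathbb{F}_2$ the matrix $S$ coincides with the adjacency matrix of $G$. The argument of Qiu, Wang and Wang (the mechanism that produces the factor $2^{\lfloor n/2\rfloor}$ in Theorem~\ref{thm:old}) shows that $2\mid\ell(Q)$ would force $\rank_2 W<\lceil n/2\rceil$, or would contradict the square-freeness of the $2$-part of $d_n$. We expect to import this step essentially verbatim.

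\textbf{Odd primes $p$.} Suppose $p\mid\ell(Q)$. Set $\ell=\ell(Q)$ and pick an integral column $z$ of $\ell Q$ with $z\not\equiv0\pmod p$. Then we have the following facts mod $p$:
\begin{itemize}
\item $z^{\rm T}z\equiv0$ and $z^{\rm T}W(\Sigma)\equiv0$;
\item more strongly, the mod-$p$ column space $V$ of $\ell Q$ is a nonzero, totally isotropic, $S^{\rm T}$-invariant subspace contained in $\Ker W(\Sigma)^{\rm T}$, since $(\ell Q)^{\rm T}W(\Sigma)=\ell W(\Delta)^{\rm T}\!\cdots$ is divisible by $\ell$.
\end{itemize}
Because $d_n$ is square-free, the $p$-part of the Smith form gives $\dim\Ker_p W^{\rm T}=\nullity_p W$.

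The key structural step is to factor the characteristic polynomial over invariant subspaces. Let $K=\Ker_p W(\Sigma)^{\rm T}$. This space is $S$-invariant (using $S^{\rm T}=-S$) and is orthogonal to $e$, so $S$ and $S+J$ agree on it. Hence $\chi(S|_K)$ divides both $\chi(S)$ and $\chi(S+J)$, and therefore divides $\Phi_p$. We need a sharper statement: $\chi(S|_K)$ divides $\sfp(\Phi_p)$. Condition (i) then forces $\chi(S|_K)=\sfp(\Phi_p)=\varphi_1\cdots\varphi_s$, with each $\varphi_i$ appearing exactly once. To prove the sharper divisibility, the concise argument promised in the paper should apply to general matrices. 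We compare $\chi(S)=\chi(S|_K)\,\chi(S|_{\mathbb{F}_p^n/K})$ and show that the quotient action, restricted to the cyclic module generated by $e$, already carries every factor of $\Phi_p$, so the multiplicities on $K$ are at most one. This is the step that needs the most care.

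Consequently $K=\bigoplus_i K_i$ with $K_i=\Ker\varphi_i(S)|_K$, and each $K_i$ is an irreducible $S$-module. For $u\in K_i$ and $v\in K_j$, skew-symmetry gives $u^{\rm T}\varphi_i(S)v=v^{\rm T}\varphi_i(-S)u$. Condition (ii) says $\varphi_i(x)$ and $\varphi_j(-x)$ are coprime for $i\neq j$, so $K_i\perp K_j$ for $i\neq j$. Thus the bilinear form restricted to $K$ is an orthogonal sum over the $K_i$. Now $V\subseteq K$ is $S$-invariant, so $V=\bigoplus (V\cap K_i)$, and by irreducibility $V$ is a sum of whole blocks $K_i$. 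Total isotropy then requires that some $K_i$ be totally isotropic.

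The expected main obstacle is to rule this out: to show that $K_i$ cannot be totally isotropic. We would argue as follows. If $K_i$ were totally isotropic, then (generalizing Wang--Wang--Zhu) one can lift it to produce an integral vector $y$ with $y^{\rm T}W\equiv0\pmod{p^2}$. This contradicts the $p$-part of $d_n$ being exactly $p$. The lifting uses $\varphi_i$ appearing exactly once, so that the generalized eigenspace equals $K_i$. If a single $\varphi_i$ is self-paired, i.e. $\varphi_i(x)\mid\varphi_i(-x)$, the isotropy analysis within $K_i$ needs a separate computation, which we would attempt via a Hensel-type lifting of $\varphi_i$ to $\mathbb{Z}_p[x]$. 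Combining the conclusions for $p=2$ and for all odd primes gives $\ell(Q)=1$, so $\Sigma$ is DGSS.
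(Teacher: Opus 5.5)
Your reduction for odd $p$ holds up to a point. Granting $\chi(S|_K)=\sfp(\Phi_p)$, the mod-$p$ column space $V$ of $\ell Q$ is a nonzero, $S$-invariant, totally isotropic subspace of $K=\Ker_p W^{\rm T}$, and (ii) makes the blocks $K_i$ pairwise orthogonal, so some $K_i$ would have to be totally isotropic. Excluding that is exactly the step you leave open, and your proposed route does not work, for three reasons.

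\textbf{All factors are self-paired.} The roots of $\Phi_p$ are closed under $\lambda\mapsto-\lambda$. This follows from $\chi(S;-x)=(-1)^n\chi(S;x)$ and $h(-x)=(-1)^{n-1}h(x)$, where $h(x)=e^{\rm T}\operatorname{adj}(xI-S)e=\chi(S;x)-\chi(S+J;x)$. Hence (ii) forces $\varphi_i^\star=\varphi_i$ for every $i$, and the self-paired case you postpone is the only case.

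\textbf{The lifting lemma is false.} The implication ``$K_i$ totally isotropic $\Rightarrow$ $y^{\rm T}W\equiv0\pmod{p^2}$ for some $y\not\equiv0$'' does not hold in general. Total isotropy means $K_i\subseteq K^\perp=\operatorname{col}_pW$, i.e.\ $p$ kills the $\varphi_i$-part of $M^\#/M$ for $M=\mathbb{Z}_p[S]e$. By contrast, $p^2\nmid d_n$ only says that $p$ kills $\mathbb{Z}_p^n/M$. For self-paired $\varphi_i$ of degree $\ge2$ there are $p$-adic models with $K=K^\perp$ and $p\,\|\,d_n$. Whenever $K_i$ is totally isotropic one also has $\varphi_i^2\mid\chi(S)$, so the generalized eigenspace is not $K_i$, contrary to what your lifting assumes. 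Mod-$p$ isotropy of $V$ alone therefore cannot give the contradiction.

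\textbf{The divisibility step.} A factor of $\chi(S|_K)$ need not divide $\chi(S|_{K^\perp})=f_p(\Sigma;x)$, since it may be a simple factor of $\chi(S)$; so your quotient argument fails. The provable direction is $\sfp(\Phi_p)\mid\chi(S|_K)$. For a root $\lambda$ of $\varphi_i$, take $(S+J)\xi=\lambda\xi$ and $S\eta=-\lambda\eta$. Then $0=\xi^{\rm T}(\lambda I+S)\eta=\eta^{\rm T}J\xi=(e^{\rm T}\eta)(e^{\rm T}\xi)$, so $\xi\in K$ or $\eta\in K$, and self-pairing gives $\varphi_i\mid\chi(S|_K)$. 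Together with (i) this yields $\chi(S|_K)=\sfp(\Phi_p)$.

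The missing idea is to compare $p$-main polynomials rather than analyse isotropy.
\begin{itemize}
\item Since $K^\perp$ is the cyclic $S$-module generated by $e$, $f_p(\Sigma;x)=\chi(S|_{K^\perp};x)$.
\item From $\chi(S)=\chi(S|_K)\,\chi(-S|_{K^\perp})$ and the $\star$-invariance of $\chi(S)$ and $\sfp(\Phi_p)$, one gets $f_p(\Sigma;x)=\chi(S;x)/\sfp(\Phi_p(\Sigma;x))$. This is a generalized skew-spectral invariant.
\item The mate $\Delta$ inherits (i)--(ii): $\Phi_p(\Delta)=\Phi_p(\Sigma)$, and $\deg\sfp(\Phi_p)\le\nullity_pW(\Delta)\le v_p(\det W(\Delta))=v_p(\det W(\Sigma))=\deg\sfp(\Phi_p)$. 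Hence $f_p(\Delta;x)=f_p(\Sigma;x)$.
\item Take a monic integral lift $f$ of degree $r=\rank_pW$. Replace the last $n-r$ columns of each walk matrix by $A^jf(A)e/p$ for $0\le j<n-r$, with $A=S(\Sigma)$ or $S(\Delta)$ respectively. This gives integral matrices with $Q^{\rm T}\bar W(\Sigma)=\bar W(\Delta)$.
\item Since $\det\bar W(\Sigma)=p^{r-n}\det W(\Sigma)$ is prime to $p$, it follows that $p\nmid\ell(Q)$.
\end{itemize}

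The prime $2$ you only promise to import from Qiu--Wang--Wang, whose theorem assumes self-converseness. It is handled the same way. Over $\F_2$, $\psi_\Sigma(x)^2$ equals $\chi(S;x)$ or $x\chi(S;x)$, so $\psi_\Sigma(S)^2=0$. Symmetry mod $2$ then gives $\psi_\Sigma(S)e=0$. With $\rank_2W=\lceil n/2\rceil=\deg\psi_\Sigma$ and the same nullity squeeze for $\Delta$, this yields $f_2(\Sigma)=\psi_\Sigma=\psi_\Delta=f_2(\Delta)$.
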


\begin{remark}\label{rem:vacuous}
Condition~\textup{(ii)} is automatically satisfied when $s=1$. In particular,
it imposes no additional restriction when $\deg\sfp(\Phi_p(\Sigma;x))=1.$ Moreover, condition~\textup{(ii)} cannot in general be omitted from the present criterion, as illustrated by Example~\ref{exam 2} in Section~\ref{sec:examples}.
\end{remark}
Indeed, Theorem~\ref{thm:old} essentially corresponds to the case
$\nullity_pW(\Sigma)=1$ for every odd prime divisor $p$ of the last invariant
factor of $W(\Sigma)$. In contrast, Theorem~\ref{thm:main} allows higher $p$-nullity and can therefore
identify a strictly broader class of controllable oriented graphs as
$\mathrm{DGSS}$. The additional condition~\textup{(ii)} is introduced to handle the
interaction of irreducible factors under the transformation $x\mapsto -x$,
which is specific to the skew-adjacency setting. Moreover,
Theorem~\ref{thm:old} is recovered as a special case of
Theorem~\ref{thm:main}. Further details are given in
Section~\ref{proof thm}.

Accordingly, self-converseness was imposed as an explicit assumption in Theorem~\ref{thm:old}. One aim of the present paper is to obtain a criterion from which self-converseness follows as a consequence rather than an assumption.

The paper is organized as follows. Section~\ref{sec:preliminaries} recalls regular rational orthogonal matrices and introduces the prime-exclusion method and the required polynomial decomposition.  The proof of Theorem~\ref{thm:main} is completed in Section~\ref{proof thm}. Section~\ref{sec:examples}
presents examples illustrating the scope of the new criterion and the role of condition~\textup{(ii)}. Conclusions are given in Section~\ref{sec:conclusion}.

\section{Preliminaries}\label{sec:preliminaries}

In this section, we present several preliminary results that will be used later. Throughout the section, we simply write $S=S(\Sigma)$ and $W=W(\Sigma)$. Let $p$ be a prime. For a nonzero integer $a$, let $
v_p(a)=\max\bigl\{k\geq 0 : p^k\mid a\bigr\}.$
\subsection{Regular rational orthogonal matrices}\label{rational orthogonal matrices}
In this subsection, we outline the main strategy for proving that oriented graphs are $\mathrm{DGSS}$. Recall that a rational orthogonal matrix $Q$ is called \emph{regular} if
$Qe=e$.

 Lemma~\ref{lem:Q-existence} establishes a correspondence between generalized skew
cospectral oriented graphs via a regular rational orthogonal matrix $Q$.
Moreover, when the oriented graph is controllable, the corresponding matrix $Q$ is unique.
\begin{lemma}[\cite{JohnsonNewman,QiuWangWang,WangXu}]\label{lem:Q-existence}
Let $\Sigma$ be a controllable oriented graph and let $\Delta$ be an oriented graph. Then $\Sigma$ and $\Delta$ have the same generalized skew spectrum if and only if there exists a unique regular rational orthogonal matrix $Q$ such that
\begin{equation}
Q^{\rm T}S(\Sigma)Q=S(\Delta).
\end{equation}
Moreover,
\begin{equation}\label{eq:walk-intertwine}
 Q^{\mathrm T}W(\Sigma)=W(\Delta).
\end{equation}
\end{lemma}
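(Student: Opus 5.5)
The plan is to prove both directions directly, building $Q$ explicitly from the two walk matrices. Write $S_1=S(\Sigma)$, $S_2=S(\Delta)$, $W_1=W(\Sigma)$, $W_2=W(\Delta)$.

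\emph{Sufficiency.} Suppose a regular rational orthogonal $Q$ with $Q^{\rm T}S_1Q=S_2$ exists. From $Qe=e$ and $QQ^{\rm T}=I$ we get $Q^{\rm T}e=e$. Hence $Q^{\rm T}JQ=Q^{\rm T}ee^{\rm T}Q=ee^{\rm T}=J$, so $Q^{\rm T}(S_1+J)Q=S_2+J$. Since $Q^{\rm T}=Q^{-1}$, both $S_1,S_2$ and $S_1+J,S_2+J$ are similar pairs, so $\Sigma$ and $\Delta$ are generalized skew cospectral.

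Moreover, $Q^{\rm T}S_1^ke=(Q^{\rm T}S_1Q)^kQ^{\rm T}e=S_2^ke$ for every $k$, which is exactly $Q^{\rm T}W_1=W_2$. This proves \eqref{eq:walk-intertwine}. It also gives uniqueness: since $W_1$ is invertible, $Q^{\rm T}=W_2W_1^{-1}$ is forced by $\Delta$.

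\emph{Necessity.} Assume the generalized skew spectra coincide.
\begin{itemize}
\item[1.] Since $J=ee^{\rm T}$ has rank one, the matrix determinant lemma gives, for $x$ outside the spectrum of $S_i$,
\[
\chi(S_i+tJ;x)=\chi(S_i;x)\bigl(1-t\,e^{\rm T}(xI-S_i)^{-1}e\bigr).
\]
This is affine in $t$. Equality at $t=0$ and $t=1$ therefore forces
\[
e^{\rm T}(xI-S_1)^{-1}e=e^{\rm T}(xI-S_2)^{-1}e .
\]
Expanding in powers of $1/x$ yields $e^{\rm T}S_1^ke=e^{\rm T}S_2^ke$ for all $k\ge 0$.
\item[2.] Because $S_i^{\rm T}=-S_i$, the $(i,j)$ entry of $W^{\rm T}W$ is $(-1)^{i}e^{\rm T}S^{i+j}e$. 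Step~1 therefore gives $W_1^{\rm T}W_1=W_2^{\rm T}W_2$. In particular $\det W_2\neq 0$, so $\Delta$ is also controllable.
\item[3.] Define $Q:=(W_2W_1^{-1})^{\rm T}$. Then $Q$ is rational, and $Q^{\rm T}W_1=W_2$ holds by construction. Orthogonality follows from step~2:
\[
Q^{\rm T}Q=W_2W_1^{-1}W_1^{-\rm T}W_2^{\rm T}=W_2(W_1^{\rm T}W_1)^{-1}W_2^{\rm T}=W_2(W_2^{\rm T}W_2)^{-1}W_2^{\rm T}=I .
\]
Regularity follows by comparing first columns of $Q^{\rm T}W_1=W_2$, which gives $Q^{\rm T}e=e$, and hence $Qe=QQ^{\rm T}e=e$.
\item[4.] For the intertwining, let $C$ be the companion matrix of the common polynomial $\chi(S_1;x)=\chi(S_2;x)$. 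By Cayley--Hamilton, $S_iW_i=W_iC$ for $i=1,2$. Then
\[
Q^{\rm T}S_1W_1=Q^{\rm T}W_1C=W_2C=S_2W_2=S_2Q^{\rm T}W_1 .
\]
Cancelling the invertible $W_1$ gives $Q^{\rm T}S_1=S_2Q^{\rm T}$, and multiplying on the right by $Q$ gives $Q^{\rm T}S_1Q=S_2$. Uniqueness is the same argument as in the sufficiency part.
\end{itemize}

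The only step that needs real care is step~1: passing from equality of $\chi(S_i+J)$ at the single value $t=1$ to equality of all the walk counts $e^{\rm T}S^ke$. This is exactly what the rank-one (affine in $t$) structure of $\chi(S+tJ;x)$ provides. Step~2 is also delicate in a minor way, since the sign pattern coming from $S^{\rm T}=-S$ must be tracked. It does not obstruct the argument, because the same signs appear for $\Sigma$ and $\Delta$.
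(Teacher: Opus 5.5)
Your proof is correct. The paper does not prove this lemma itself; it quotes it from Johnson--Newman, Wang--Xu and Qiu--Wang--Wang. There the classical route runs the other way round. A Johnson--Newman type theorem first produces a \emph{real} orthogonal $Q$ with $Qe=e$ and $Q^{\rm T}S(\Sigma)Q=S(\Delta)$ from the spectral data alone. Controllability enters only afterwards: via $Q^{\rm T}W(\Sigma)=W(\Delta)$ it forces $Q^{\rm T}=W(\Delta)W(\Sigma)^{-1}$, which gives rationality and uniqueness.

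You skip that existence theorem altogether. The rank-one determinant identity (the same one the paper uses in the proof of Lemma~\ref{lem:root-symmetry}) turns the two characteristic-polynomial equalities into $e^{\rm T}S(\Sigma)^ke=e^{\rm T}S(\Delta)^ke$ for all $k$. This gives the Gram identity $W(\Sigma)^{\rm T}W(\Sigma)=W(\Delta)^{\rm T}W(\Delta)$. You then verify directly that $(W(\Delta)W(\Sigma)^{-1})^{\rm T}$ is orthogonal, regular and intertwining, and the common companion matrix handles the last point cleanly.

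Your route is more elementary and self-contained. It treats the symmetric and skew-symmetric cases uniformly, since only the sign $(-1)^i$ in the Gram matrix changes, and it shows for free that $\Delta$ is controllable. The price is that you need $\det W(\Sigma)\neq 0$ already to get existence of $Q$. The Johnson--Newman approach gives a regular real orthogonal $Q$ for every generalized skew-cospectral pair, controllable or not, though that $Q$ may be irrational and non-unique.
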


Let $\mathcal C(\Sigma)$ denote the set of all oriented graphs having the same
generalized skew spectrum as $\Sigma$, and let $\mathrm{RO}_n(\mathbb{Q})$
denote the set of all regular rational orthogonal matrices of order $n$.
Define
\[
\mathcal Q(\Sigma)
:=
\bigcup_{\Delta\in\mathcal C(\Sigma)}
\left\{
Q\in\mathrm{RO}_n(\mathbb Q):
Q^{\mathrm T}S(\Sigma)Q=S(\Delta)
\right\}.
\]

\begin{lemma}[\cite{QiuWangWang}]
Let $\Sigma$ be a controllable oriented graph. Then $\Sigma$ is $\mathrm{DGSS}$ if and only if every matrix in
$\mathcal{Q}(\Sigma)$ is a permutation matrix.
\end{lemma}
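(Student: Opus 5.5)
The plan is to prove the two implications separately, using Lemma~\ref{lem:Q-existence} throughout and, for the nontrivial direction, the uniqueness of $Q$ that controllability provides.

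\emph{Sufficiency.} Suppose every matrix in $\mathcal Q(\Sigma)$ is a permutation matrix, and let $\Delta\in\mathcal C(\Sigma)$. By Lemma~\ref{lem:Q-existence} there is a regular rational orthogonal $Q$ with $Q^{\mathrm T}S(\Sigma)Q=S(\Delta)$. By definition this $Q$ lies in $\mathcal Q(\Sigma)$, so it is a permutation matrix. Conjugating $S(\Sigma)$ by a permutation matrix merely relabels the vertices, so $\Delta\cong\Sigma$. Hence $\Sigma$ is $\mathrm{DGSS}$.

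\emph{Necessity.} Assume $\Sigma$ is $\mathrm{DGSS}$, and let $Q\in\mathcal Q(\Sigma)$, say $Q^{\mathrm T}S(\Sigma)Q=S(\Delta)$ with $\Delta\in\mathcal C(\Sigma)$. Since $\Sigma$ is $\mathrm{DGSS}$, $\Delta\cong\Sigma$, so there is a permutation matrix $P$ with $P^{\mathrm T}S(\Sigma)P=S(\Delta)$. Such a $P$ is itself a regular rational orthogonal matrix, because $Pe=e$ and $P^{\mathrm T}P=I$.

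The key step is to show $Q=P$. I would argue directly via walk matrices rather than only quoting the uniqueness clause. Since $P^{\mathrm T}e=e$, for every $k$ we have
\[
P^{\mathrm T}S(\Sigma)^ke=\bigl(P^{\mathrm T}S(\Sigma)P\bigr)^kP^{\mathrm T}e=S(\Delta)^ke,
\]
so $P^{\mathrm T}W(\Sigma)=W(\Delta)$. The same computation applies to $Q$ (this is \eqref{eq:walk-intertwine}), giving $Q^{\mathrm T}W(\Sigma)=W(\Delta)$. Because $\Sigma$ is controllable, $W(\Sigma)$ is invertible, and therefore
\[
Q^{\mathrm T}=W(\Delta)W(\Sigma)^{-1}=P^{\mathrm T}.
\]
Thus $Q=P$ is a permutation matrix.

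No step is a real obstacle. The only point needing care is that the uniqueness of $Q$ must be pinned to the specific pair $(\Sigma,\Delta)$, which the walk-matrix identity does explicitly. It is also where controllability is genuinely used: without it, a non-permutation $Q$ could conjugate $S(\Sigma)$ into the skew-adjacency matrix of a graph isomorphic to $\Sigma$.
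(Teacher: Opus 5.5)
Your proof is correct and follows the standard argument behind this lemma (the paper gives no proof of its own and simply cites Qiu--Wang--Wang): sufficiency is immediate from Lemma~\ref{lem:Q-existence}, and for necessity the permutation matrix $P$ realizing $\Delta\cong\Sigma$ must coincide with $Q$. Your direct verification $Q^{\mathrm T}=W(\Delta)W(\Sigma)^{-1}=P^{\mathrm T}$ is exactly how the uniqueness clause coming from controllability is established.
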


The \emph{level} of a regular rational orthogonal matrix $Q$, denoted by
$\ell(Q)$ or simply $\ell$, is the smallest positive integer $\ell$ such that $\ell Q$ is an
integral matrix. Clearly, a regular rational orthogonal matrix $Q$ is a
permutation matrix if and only if $\ell(Q)=1$. Consequently, the preceding
lemma can be equivalently reformulated as follows.

\begin{corollary}[\cite{QiuWangWang}]\label{l=1}
Let $\Sigma$ be a controllable oriented graph with
$\det W(\Sigma)\neq 0$. Then $\Sigma$ is $\mathrm{DGSS}$ if and only if every
matrix $Q\in\mathcal{Q}(\Sigma)$ has level $1$.
\end{corollary}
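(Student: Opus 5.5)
The corollary follows by combining the preceding lemma with an elementary characterization of regular rational orthogonal matrices of level $1$. By that lemma, for controllable $\Sigma$, $\Sigma$ is $\mathrm{DGSS}$ if and only if every $Q\in\mathcal{Q}(\Sigma)$ is a permutation matrix. So it suffices to show that a matrix $Q\in\mathrm{RO}_n(\mathbb{Q})$ is a permutation matrix if and only if $\ell(Q)=1$.

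For the forward direction, let $Q$ be a permutation matrix. Then $Q$ is integral, so $1\cdot Q$ is integral. Since $\ell(Q)$ is by definition a positive integer, it is at least $1$, and hence $\ell(Q)=1$.

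For the converse, suppose $\ell(Q)=1$, so $Q$ is an integral matrix. Orthogonality gives $QQ^{\mathrm T}=I$.
\begin{itemize}
\item Each row of $Q$ therefore has integer entries whose squares sum to $1$.
\item Hence each row contains exactly one entry equal to $\pm1$ and all its other entries are $0$.
\item The column version of the same argument, using $Q^{\mathrm T}Q=I$, shows that $Q$ is a signed permutation matrix.
\end{itemize}
Now use regularity. Since $Qe=e$, the sum of the entries in each row equals $1$. The only nonzero entry of a row is $\pm1$, so it must be $+1$. Thus $Q$ is a genuine permutation matrix.

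Substituting this equivalence into the preceding lemma gives the corollary. The hypothesis $\det W(\Sigma)\neq0$ is just the controllability assumption that the lemma requires. There is no real obstacle here; the only point needing care is that regularity is what rules out the sign changes.
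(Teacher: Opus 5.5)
Your proposal is correct and follows essentially the same route as the paper, which obtains the corollary by combining the preceding lemma with the observation that a regular rational orthogonal matrix is a permutation matrix if and only if $\ell(Q)=1$. The paper simply asserts that observation as clear, while you verify it explicitly: an integral orthogonal matrix is a signed permutation matrix, and $Qe=e$ rules out the signs.
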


The next lemma shows the relationship between the level $\ell$ and the last invariant factors of $W(\Sigma)$.

\begin{lemma}[\cite{ref6}]\label{l dn}

    Let $X$ be a nonsingular integral matrix with last invariant factor $d_n(X)$. If $Q$ is rational and $Q^{\mathrm T}X$ is integral, then the level of $Q$ divides $d_n(X)$. In particular, if $Q\in\mathcal Q(\Sigma)$, then
$
 \ell(Q)\mid d_n(W(\Sigma)).$
\end{lemma}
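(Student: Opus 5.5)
The plan is to show directly that $d_n(X)\,Q$ is an integral matrix, and then use the fact that the integers $m$ with $mQ$ integral form an ideal of $\mathbb{Z}$ generated by $\ell(Q)$.

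First I write the Smith normal form of $X$ as $X=UDV$. Here $U,V$ are unimodular integral matrices and $D=\diag(d_1,\ldots,d_n)$ with $d_i\mid d_{i+1}$. Since $X$ is nonsingular, every $d_i$ is a positive integer and
\[
X^{-1}=V^{-1}D^{-1}U^{-1}.
\]
Because $d_i\mid d_n$ for all $i$, the matrix $d_nD^{-1}=\diag(d_n/d_1,\ldots,d_n/d_n)$ is integral. The matrices $U^{-1},V^{-1}$ are integral by unimodularity. Hence $d_nX^{-1}$ is an integral matrix.

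Next, set $Y:=Q^{\mathrm T}X$, which is integral by hypothesis. Then $Q^{\mathrm T}=YX^{-1}$, so $d_nQ^{\mathrm T}=Y\,(d_nX^{-1})$ is a product of integral matrices. Thus $d_nQ$ is integral.

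To conclude $\ell(Q)\mid d_n$, consider the set $I=\{m\in\mathbb{Z}: mQ \text{ is integral}\}$. It is closed under addition and under multiplication by integers, so it is an ideal of $\mathbb{Z}$. It is nonzero because $Q$ is rational. Its least positive element is $\ell(Q)$ by definition, so $I=\ell(Q)\mathbb{Z}$. Since $d_n\in I$, we get $\ell(Q)\mid d_n(X)$.

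For the \emph{in particular} statement, let $Q\in\mathcal Q(\Sigma)$, with $Q^{\mathrm T}S(\Sigma)Q=S(\Delta)$ for some $\Delta\in\mathcal C(\Sigma)$. Lemma~\ref{lem:Q-existence} gives $Q^{\mathrm T}W(\Sigma)=W(\Delta)$, which is integral. Controllability means $W(\Sigma)$ is nonsingular, so we may apply the first part with $X=W(\Sigma)$.

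No step is a genuine obstacle. The only point needing care is that integrality of $d_nX^{-1}$ relies on the divisibility chain of the invariant factors, rather than on $\det X$, which would only give the weaker bound $\ell\mid\det X$.
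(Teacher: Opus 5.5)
Your proof is correct. The Smith normal form makes $d_n(X)X^{-1}$ integral, so $d_n(X)Q^{\mathrm T}=(Q^{\mathrm T}X)\,(d_n(X)X^{-1})$ is integral, and the ideal $\{m\in\mathbb{Z}: mQ \text{ integral}\}=\ell(Q)\mathbb{Z}$ then gives $\ell(Q)\mid d_n(X)$; the particular case follows from $Q^{\mathrm T}W(\Sigma)=W(\Delta)$ together with controllability. The paper gives no proof of its own for this lemma, since it is quoted from the literature, and your argument is the standard one behind that citation.
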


\subsection{The $p$-main polynomial} \label{The $p$-main polynomial}
The $p$-main polynomial, introduced in~\cite{WangWangZhu} in the study of generalized spectral determination of controllable simple graphs, plays a central role in our proof.
\begin{definition}
    Let $p$ be a prime. Let $\Sigma$ be an oriented graph with skew-adjacency matrix $S$. The \emph{$p$-main polynomial} of $\Sigma$, denoted by $f_p(\Sigma;x)$, is the monic polynomial of smallest degree in $\F_p[x]$ such that $
 f_p(\Sigma;S)e=0.$
\end{definition}

\begin{lemma}[\cite{QiuWangWang}]\label{lem:first-r-columns}
Let $p$ be a prime and let $r=\rank_p W(\Sigma)$. Then the first $r$ columns of $W(\Sigma)$ consist of a basis of the column space of $W(\Sigma)$.
\end{lemma}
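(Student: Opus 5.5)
The plan is a Krylov-subspace argument over $\F_p$. I reduce $S$ and $e$ modulo $p$ and study the span of the columns $e,Se,S^2e,\ldots$ over $\F_p$. The column space of $W(\Sigma)$ over $\F_p$ is $V=\operatorname{span}_{\F_p}\{e,Se,\ldots,S^{n-1}e\}$, and $\dim V=r$.

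First I locate the first dependence. Let $k$ be the smallest index such that $S^ke$ lies in $U_k:=\operatorname{span}\{e,Se,\ldots,S^{k-1}e\}$. Such a $k$ exists with $k\le n$, because $n+1$ vectors in $\F_p^n$ are dependent. (If $e\equiv 0$ the statement is vacuous; this cannot happen anyway, since $e$ is the all-one vector.) By minimality of $k$, the vectors $e,Se,\ldots,S^{k-1}e$ are linearly independent over $\F_p$.

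Next I show $U_k$ is $S$-invariant. Write $S^ke=\sum_{i<k}c_iS^ie$. Then for $u=\sum_{i<k}a_iS^ie\in U_k$,
\[
Su=\sum_{i<k-1}a_iS^{i+1}e+a_{k-1}S^ke,
\]
and both parts lie in $U_k$. By induction on $j$, every $S^je$ with $j\ge k$ lies in $U_k$. Hence $V=U_k$, and $r=\dim V=k$.

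It follows that the first $r=k$ columns $e,Se,\ldots,S^{r-1}e$ are linearly independent and span $V$. They therefore form a basis of the column space of $W(\Sigma)$ over $\F_p$. As a by-product, $r$ equals the degree of the $p$-main polynomial $f_p(\Sigma;x)$.

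No step is a real obstacle. The only point needing care is the invariance step: it must be carried out over $\F_p$, so all coefficients $c_i$ are taken in $\F_p$ rather than in $\mathbb{Q}$. Since the argument uses only the ring structure of $\F_p$, this works unchanged for every prime $p$, including $p=2$.
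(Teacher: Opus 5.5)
Your Krylov-subspace argument is correct. It is the standard proof of this fact. The paper does not reprove the lemma; it cites Qiu--Wang--Wang, where the argument is the same: at the first index $k$ with $S^ke$ in the span of the earlier columns over $\mathbb{F}_p$, that span is $S$-invariant and so contains all later columns.
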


\begin{lemma}\label{lem:p-main-degree}
For every prime $p$, $
 \deg f_p(\Sigma;x)=\rank_p W(\Sigma).$
\end{lemma}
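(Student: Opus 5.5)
Write $r=\rank_p W(\Sigma)$ and $d=\deg f_p(\Sigma;x)$. I will prove $d=r$ by showing both $d\le r$ and $r\le d$, working entirely over $\F_p$ and reducing $S$ and $e$ modulo $p$ throughout.

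\emph{Step 1: $r\le d$.} Write $f_p(\Sigma;x)=x^d+c_{d-1}x^{d-1}+\cdots+c_0$. The defining relation $f_p(\Sigma;S)e=0$ says that $S^de$ lies in the span of $e,Se,\ldots,S^{d-1}e$ over $\F_p$. Multiplying by powers of $S$ and inducting on $k$, every $S^ke$ with $k\ge d$ lies in this span. Hence the column space of $W(\Sigma)$ over $\F_p$ is spanned by the $d$ vectors $e,\ldots,S^{d-1}e$, and so $r\le d$. If $d>n$, this argument still gives $r\le n<d$; that case cannot actually occur, by Cayley--Hamilton, but the inequality holds regardless.

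\emph{Step 2: $d\le r$.} By Lemma~\ref{lem:first-r-columns}, the first $r$ columns $e,Se,\ldots,S^{r-1}e$ form a basis of the column space of $W(\Sigma)$ over $\F_p$. The case $r=n$ needs separate handling. There the column $S^ne$ is not among the columns of $W$, but the monic polynomial $\chi(S;x)$ of degree $n$ annihilates $S$, so $d\le n=r$.

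If $r<n$, then $S^re$ is a column of $W(\Sigma)$, so it lies in the span of the first $r$ columns: $S^re=\sum_{i<r}a_iS^ie$ for some $a_i\in\F_p$. Then the monic polynomial $x^r-\sum_{i<r}a_ix^i$ of degree $r$ annihilates $e$ in the sense of the definition, so by minimality $d\le r$.

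Combining the two steps gives $d=r$. The only delicate point is the boundary case $r=n$ in Step~2, which Cayley--Hamilton settles.
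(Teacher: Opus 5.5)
Your proof is correct and follows essentially the paper's route: Lemma~\ref{lem:first-r-columns} gives a monic degree-$r$ annihilator of $e$, and the reverse inequality comes from the Krylov structure. For that reverse inequality you use a spanning argument, where the paper uses linear independence of $e,\ldots,S^{r-1}e$. Your separate treatment of $r=n$ is valid, though Cayley--Hamilton is not needed there, since the first $n$ columns already span $\F_p^n$ and so $S^ne$ lies in their span automatically.
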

\begin{proof}
Let $r=\rank_p W(\Sigma)$. By Lemma~\ref{lem:first-r-columns},
the vectors $e,Se,\ldots,S^{r-1}e$ are linearly independent, while
$S^re\in\operatorname{Span}\{e,Se,\ldots,S^{r-1}e\}$. Hence there exists a
monic polynomial of degree $r$ that annihilates $e$. The minimality of $r$
implies that $
\deg f_p(\Sigma;x)=r=\rank_p W(\Sigma).$
\end{proof}

The following theorem provides a unified criterion for excluding a prime divisor
from the level of $Q$.  The theorem is inspired by the arguments developed in \cite{QiuWangWang,WangWangZhu}.

\begin{theorem}[Prime-exclusion theorem]\label{lem:prime-exclusion}  Let $\Sigma\in \mathcal{F}_{n}$ and $\Delta$ be generalized skew cospectral with $\Sigma$.
 Let $Q\in\mathcal Q(\Sigma)$ have level $\ell(Q)$ such that $Q^{\rm T}S(\Sigma)Q=S(\Delta)$.  Let $p$ be a prime divisor of $d_n(W(\Sigma))$. Assume that
 \begin{enumerate}
    \item $p^2\nmid d_n(W(\Sigma))$;
    \item $f_p(\Sigma;x)=f_p(\Delta;x)$.
 \end{enumerate}
Then $p\nmid\ell(Q)$.
\end{theorem}

\begin{proof}
    Write
$A=S(\Sigma)$, $B=Q^{\mathrm T}AQ=S(\Delta)$, and let $r={\rm rank}_{p}W\,(\Sigma)$. Let $f(x)\in \mathbb{Z}[x]$ be  a monic  polynomial such that $f(x)\equiv f_p(\Sigma;x)\equiv f_p(\Delta;x) \pmod{p}$,
where $f(A)e \equiv  f(B)e \equiv 0 \:(\mathrm{mod}\;p)$. Note that $\nullity_pW(\Sigma)=n-r$. It follows from Lemma~\ref{lem:p-main-degree} that the polynomial $f(x)$ has degree $r$ and $\nullity_pW(\Sigma)=\nullity_pW(\Delta)$.
 Now we define two new matrices as follows:
\begin{equation}
    \bar {W}(\Sigma)=[e,Ae,\dots,A^{r-1}e,\frac{f(A)e}{p},\frac{Af(A)e}{p},\dots,\frac{A^{n-r-2}f(A)e}{p}, \frac{A^{n-r-1}f(A)e}{p}],
\end{equation}

\begin{equation}
    \bar {W}(\Delta)=[e,Be,\dots,B^{r-1}e,\frac{f(B)e}{p}   ,\frac{Bf(B)e}{p}    ,\dots,\frac{B^{n-r-2}f(B)e}{p},\frac{B^{n-r-1}f(B)e}{p}].
\end{equation}
Both $\bar {W}(\Sigma)$ and $\bar {W}(\Delta)$ are integral matrices. It follows from Lemma~\ref{lem:Q-existence} that $Q^{\rm T}\bar {W}(\Sigma)=\bar {W}(\Delta)$. By Lemma~\ref{l dn}, we obtain $
\ell(Q)\mid d_n(\bar W(\Sigma)),$
and hence
$
\ell(Q)\mid\det\bar W(\Sigma).
$

Because every invariant factor of $W(\Sigma)$ divides $d_n(W(\Sigma))$ and
$p^2\nmid d_n(W(\Sigma))$, each invariant factor contains at most one factor $p$. Moreover, as $\nullity_pW(\Sigma)=n-r$, then
exactly $n-r$ invariant factors are divisible by $p$. Thus, we have $v_p(\det W(\Sigma))=n-r$.  Note that $\det\bar {W}(\Sigma)=p^{r-n}\det W(\Sigma)$, which implies that $p \nmid \det\bar{W}(\Sigma)$ and thus $p\nmid \ell(Q)$.

This completes the proof.
\end{proof}

\subsection{Invariant subspaces and polynomial decomposition}
Throughout this section, let $p$ be a fixed odd prime. Recall that
$\mathbb{F}_p$ denotes the finite field of order $p$, and
$\overline{\mathbb{F}}_p$ its algebraic closure. We begin with some basic
notions concerning orthogonal and dual spaces.

For $u,v\in\mathbb{F}_p^n$, we say that $u$ and $v$ are \emph{orthogonal},
denoted by $u\perp v$, if $
u^{\rm T}v=0.$
Similarly, two subspaces $U,V\subseteq\mathbb{F}_p^n$ are said to be
\emph{orthogonal}, denoted by $U\perp V$, if $
u^{\rm T}v=0 $
for all $u\in U$ and $v\in V$.

\begin{definition}[\cite{L.Babai}]\label{V com}
Let $V$ be a subspace of $\mathbb{F}_p^n$. The orthogonal space
of $V$ is defined by
\[
V^\perp
=
\bigl\{
u\in\mathbb{F}_p^n:
v^{\rm T}u=0 \text{ for every } v\in V
\bigr\}.
\]
\end{definition}

\begin{definition}[\cite{HoffmanKunze1971}]
Let $V$ be a vector space over a field $\mathbb{F}$. A \emph{linear
functional} on $V$ is a linear map from $V$ to $\mathbb{F}$. The
\emph{dual space} of $V$, denoted by $V^*$, is the vector space of all
linear functionals on $V$; equivalently,
\[
V^*=\operatorname{Hom}_{\mathbb{F}}(V,\mathbb{F}).
\]
If $V$ is finite-dimensional, then $\dim V^*=\dim V$.
\end{definition}
The standard bilinear form $\langle u,v\rangle=u^{\rm T}v$ on
$\mathbb{F}_p^n$ is nondegenerate. Consequently, for every subspace
$V\subseteq\mathbb{F}_p^n$,
\[
\dim V^\perp=n-\dim V
\qquad\text{and}\qquad
(V^\perp)^\perp=V.
\]
Unlike the Euclidean inner product, the standard bilinear form on
$\mathbb{F}_p^n$ need not be positive definite. Thus one may have $
V\cap V^\perp\neq\{0\},$
and consequently $
\mathbb{F}_p^n=V\oplus V^\perp$
does not hold in general.

Therefore, the characteristic polynomial of a linear operator cannot in general be factorized directly in terms of its restrictions to $V$ and $V^\perp$.  Nevertheless, when $V$ is invariant under the operator, such a factorization can be obtained by considering the induced operator on the quotient space together with the transpose operator restricted to $V^\perp$. We now establish this factorization.

\begin{lemma}\label{fpoly}
Let $S$ be a matrix over $\mathbb{F}_p$, and let $U$ be an
$S$-invariant subspace of $\mathbb{F}_p^n$. Then $U^\perp$ is
$S^{\rm T}$-invariant, and
\[
\chi(S;x)
=
\chi(S|_U;x)\,
\chi(S^{\rm T}|_{U^\perp};x).
\]
\end{lemma}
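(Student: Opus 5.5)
The plan is to prove Lemma~\ref{fpoly} by choosing a basis of $\mathbb{F}_p^n$ adapted to $U$, writing $S$ in block upper-triangular form, and identifying the lower diagonal block with the transpose of $S^{\rm T}$ restricted to $U^\perp$.

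First, the invariance claim. Let $w\in U^\perp$ and $u\in U$. Then
\[
u^{\rm T}(S^{\rm T}w)=(Su)^{\rm T}w=0,
\]
because $Su\in U$. Hence $S^{\rm T}w\in U^\perp$, so $U^\perp$ is $S^{\rm T}$-invariant.

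For the factorization, let $k=\dim U$ and pick a basis $u_1,\ldots,u_k$ of $U$. Extend it to a basis $u_1,\ldots,u_n$ of $\mathbb{F}_p^n$, and set $P=[u_1,\ldots,u_n]$, which is invertible. Since $U$ is $S$-invariant,
\[
P^{-1}SP=\begin{pmatrix}A&B\\0&C\end{pmatrix},
\]
where $A$ is the matrix of $S|_U$ and $C$ is the matrix of the induced operator on $\mathbb{F}_p^n/U$. It follows that $\chi(S;x)=\chi(A;x)\chi(C;x)=\chi(S|_U;x)\chi(C;x)$.

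It remains to show $\chi(C;x)=\chi(S^{\rm T}|_{U^\perp};x)$. The natural tool is the dual basis. Let the rows of $P^{-1}$ be $w_1^{\rm T},\ldots,w_n^{\rm T}$, so that $w_i^{\rm T}u_j=\delta_{ij}$. Because $U=\operatorname{Span}\{u_1,\ldots,u_k\}$, the vectors $w_{k+1},\ldots,w_n$ lie in $U^\perp$. They are linearly independent, and $\dim U^\perp=n-k$ by nondegeneracy of the standard form, so they form a basis of $U^\perp$.

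Transposing the block form gives
\[
P^{\rm T}S^{\rm T}(P^{-1})^{\rm T}=\begin{pmatrix}A^{\rm T}&0\\B^{\rm T}&C^{\rm T}\end{pmatrix}.
\]
The columns of $(P^{-1})^{\rm T}$ are $w_1,\ldots,w_n$. Reading off the last $n-k$ columns of this identity shows that, in the basis $w_{k+1},\ldots,w_n$ of $U^\perp$, the operator $S^{\rm T}|_{U^\perp}$ has matrix $C^{\rm T}$. Since $\chi(C^{\rm T};x)=\chi(C;x)$, the factorization follows.

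The only delicate point is making sure we never use $U\cap U^\perp=\{0\}$, which can fail over $\mathbb{F}_p$. The dual-basis argument avoids it entirely, since it relies only on $\dim U^\perp=n-\dim U$. So the main obstacle is simply bookkeeping the index ranges correctly in the transposed block identity.
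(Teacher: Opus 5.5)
Your proof is correct and follows the same route as the paper: a block upper-triangular form of $S$ in a basis extending one of $U$, followed by identifying the quotient block with the transpose of the matrix of $S^{\rm T}|_{U^\perp}$. The paper does this identification abstractly, via the isomorphism $\mathbb{F}_p^n/U\cong (U^\perp)^*$ that intertwines $\widetilde S$ with the dual operator $T^*$; your dual-basis computation with the rows of $P^{-1}$ is the coordinate version of that isomorphism, and like the paper's argument it never uses $U\cap U^\perp=\{0\}$.
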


\begin{proof}
Let $\langle x,y\rangle=x^{\rm T}y$ denote the standard bilinear form on
$\mathbb{F}_p^n$. For all $x,y\in\mathbb{F}_p^n$, we have
$\langle Sx,y\rangle=\langle x,S^{\rm T}y\rangle$.

We first show that $U^\perp$ is $S^{\rm T}$-invariant. Let $y\in U^\perp$.
Since $U$ is $S$-invariant, we have $Sx\in U$ for every $x\in U$. Hence
$\langle x,S^{\rm T}y\rangle=\langle Sx,y\rangle=0$ for every $x\in U$,
and thus $S^{\rm T}y\in U^\perp$.

Since $U$ is $S$-invariant, the matrix $S$ induces a well-defined linear
operator on the quotient space $\mathbb{F}_p^n/U$, given by
\[
\widetilde{S}\colon
\mathbb{F}_p^n/U\longrightarrow\mathbb{F}_p^n/U,
\qquad
\widetilde{S}(v+U)=Sv+U.
\]
Choose a basis of $U$ and extend it to a basis of $\mathbb{F}_p^n$. With
respect to this basis, the matrix of $S$ has the block upper triangular form
\[
S=
\begin{pmatrix}
S|_U & *\\
0 & \widetilde{S}
\end{pmatrix}.
\]
Thus
$\chi(S;x)=\chi(S|_U;x)\chi(\widetilde S;x)$.

It remains to determine the characteristic polynomial of
$\widetilde{S}$. Define
\[
\Phi\colon
\mathbb{F}_p^n/U\longrightarrow (U^\perp)^*,
\qquad
\Phi(v+U)(w)=\langle v,w\rangle
\quad\text{for }w\in U^\perp.
\]

We first verify that $\Phi$ is well-defined. Suppose that
$v+U=v'+U$. Then $v'-v\in U$, and hence
$\langle v'-v,w\rangle=0$ for every $w\in U^\perp$. Therefore,
$\Phi(v+U)=\Phi(v'+U)$. Moreover, if $\Phi(v+U)=0$, then we have
$\langle v,w\rangle=0$ for every $w\in U^\perp$, which indicates that
$v\in(U^\perp)^\perp=U$. Therefore, $v+U$ is the zero element
of $\mathbb{F}_p^n/U$, and hence we get that $\Phi$ is injective.

On the other hand,
\[
\dim(\mathbb{F}_p^n/U)
=
n-\dim U
=
\dim U^\perp
=
\dim (U^\perp)^*.
\]
It follows that $\Phi$ is an isomorphism.

We now compare the action of \(\widetilde S\) under this isomorphism with the
dual action on \((U^\perp)^*\). Let $T=S^{\rm T}|_{U^\perp}$. For \(v\in \mathbb{F}_p^n\) and
\(w\in U^\perp\), we have
\[
\begin{aligned}
\Phi\bigl(\widetilde{S}(v+U)\bigr)(w)
&=\Phi(Sv+U)(w)\\
&=\langle Sv,w\rangle\\
&=\langle v,S^{\rm T}w\rangle\\
&=\Phi(v+U)(Tw)\\
&=T^*\bigl(\Phi(v+U)\bigr)(w).
\end{aligned}
\]
Hence $\Phi\widetilde S=T^*\Phi$, so $\widetilde S$ is similar to $T^*$.
Since a linear operator and its dual have the same characteristic polynomial, we obtain
\[
\chi(\widetilde{S};x)
=
\chi(T^*;x)
=
\chi(T;x)
=
\chi(S^{\rm T}|_{U^\perp};x).
\]
Therefore,
\[
\chi(S;x)
=
\chi(S|_U;x)\,
\chi(S^{\rm T}|_{U^\perp};x).
\]
This completes the proof.
\end{proof}

\begin{corollary}\label{factorization}
   Let $S$ be a skew-symmetric matrix over $\mathbb{F}_p$ and let $U$ be an $S$-invariant subspace of $\mathbb{F}_p^n$. Then
\begin{equation}\label{eq.factorization}
   \chi(S;x) = \chi(S|_U;x)\, \chi(-S|_{U^\perp};x).
\end{equation}

\end{corollary}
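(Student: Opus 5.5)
This is a direct specialization of Lemma~\ref{fpoly}. The plan is to apply that lemma to the given skew-symmetric $S$ and the $S$-invariant subspace $U$, and then rewrite its second factor using $S^{\rm T}=-S$.

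First, Lemma~\ref{fpoly} requires no hypothesis on $S$ beyond being a matrix over $\mathbb{F}_p$. It therefore applies here and gives two facts: $U^\perp$ is $S^{\rm T}$-invariant, and
\[
\chi(S;x)=\chi(S|_U;x)\,\chi(S^{\rm T}|_{U^\perp};x).
\]

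Next, since $S$ is skew-symmetric over $\mathbb{F}_p$, we have $S^{\rm T}=-S$ as matrices. Hence the operator $S^{\rm T}|_{U^\perp}$ coincides with $-S|_{U^\perp}$. This restriction is well defined because $U^\perp$ is invariant under $S^{\rm T}=-S$, and therefore also under $S$.

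Substituting this identification into the factorization from Lemma~\ref{fpoly} gives exactly \eqref{eq.factorization}.

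There is no genuine obstacle. The one point to state explicitly is that $U^\perp$ is invariant under $-S$, so that $\chi(-S|_{U^\perp};x)$ makes sense; this follows immediately from the first assertion of Lemma~\ref{fpoly}.
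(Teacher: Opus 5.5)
Your proposal is correct and matches the paper's proof: you apply Lemma~\ref{fpoly} to $S$ and $U$, then substitute $S^{\rm T}=-S$ in the second factor. Your explicit remark that $U^\perp$ is invariant under $S^{\rm T}=-S$, so that $\chi(-S|_{U^\perp};x)$ is well defined, is a small clarification that the paper leaves implicit.
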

\begin{proof}
Since $S$ is skew-symmetric over $\mathbb{F}_p$, we have
$S^{\rm T}=-S$. Therefore, it follows from Lemma~\ref{fpoly} that $
\chi(S;x)
=
\chi(S|_U;x)\,
\chi(-S|_{U^\perp};x)$.
\end{proof}

\begin{corollary}\label{factorization sy}
   Let $S$ be a symmetric matrix over $\mathbb{F}_p$ and let $U$ be an $S$-invariant subspace of $\mathbb{F}_p^n$. Then
\begin{equation}
   \chi(S;x) = \chi(S|_U;x)\, \chi(S|_{U^\perp};x).
\end{equation}

\end{corollary}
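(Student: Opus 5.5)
This is a direct specialization of Lemma~\ref{fpoly}, so the plan is simply to apply it with $S^{\rm T}=S$, mirroring the proof of Corollary~\ref{factorization}.

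First, since $U$ is $S$-invariant, Lemma~\ref{fpoly} yields that $U^\perp$ is $S^{\rm T}$-invariant. Because $S^{\rm T}=S$, this says $U^\perp$ is $S$-invariant, so $S|_{U^\perp}$ is a well-defined operator on $U^\perp$. Moreover, the operators $S^{\rm T}|_{U^\perp}$ and $S|_{U^\perp}$ coincide as maps $U^\perp\to U^\perp$, and therefore have the same characteristic polynomial.

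Second, substituting this into the identity of Lemma~\ref{fpoly},
\[
\chi(S;x)=\chi(S|_U;x)\,\chi(S^{\rm T}|_{U^\perp};x),
\]
gives $\chi(S;x)=\chi(S|_U;x)\,\chi(S|_{U^\perp};x)$, which is the claim.

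There is no real obstacle here. The only point worth stating explicitly is that the argument never uses a direct-sum decomposition $\mathbb{F}_p^n=U\oplus U^\perp$, which may fail over $\mathbb{F}_p$. The factorization rests entirely on the quotient/dual argument in Lemma~\ref{fpoly}, so the corollary holds even when $U\cap U^\perp\neq\{0\}$.
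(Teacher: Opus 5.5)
Your proposal is correct and follows the same route as the paper: the paper's proof simply sets $S^{\rm T}=S$ in Lemma~\ref{fpoly}. Your extra remarks, that $U^\perp$ is then $S$-invariant, that $S^{\rm T}|_{U^\perp}=S|_{U^\perp}$, and that no decomposition $\mathbb{F}_p^n=U\oplus U^\perp$ is needed, are accurate.
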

\begin{proof}
Since $S$ is symmetric over $\mathbb{F}_p$, we have
$S^{\rm T}=S$. The assertion follows.
\end{proof}

A factorization equivalent to Corollary~\ref{factorization sy} was
obtained by Wang et al.~\cite{WangWangZhu} for symmetric matrices.
Lemma~\ref{fpoly} extends this factorization to arbitrary matrices,
while Corollary~\ref{factorization} gives the corresponding
specialization to skew-symmetric matrices. This approach also yields
a shorter derivation of the symmetric case.
\section{Proof of Theorem~\ref{thm:main}}\label{proof thm}
This section is devoted to the proof of Theorem~\ref{thm:main}. By the strategy developed in Sections~\ref{rational orthogonal matrices} and~\ref{The $p$-main polynomial}, it suffices to exclude every prime divisor of $d_n(W(\Sigma))$ from the level of each matrix in $\mathcal Q(\Sigma)$. The key step is to show that generalized skew-cospectral oriented graphs share the same $p$-main polynomial under the assumptions of the main theorem.

\subsection{$p=2$}\label{PQ}
We first deal with the prime $2$.
\begin{lemma}[cf.~\cite{Wang2017}]\label{lem: Me=0}
    Let $M$ be a symmetric matrix over $\F_2$. If $M^2=0$, then $Me=0$.
\end{lemma}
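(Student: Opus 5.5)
The plan is to prove this by a direct computation over $\F_2$, using the diagonal of $M$. The key observation is that the diagonal of $M^2$ already carries enough information to force $Me=0$.

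Write $M=(m_{ij})$ with $m_{ij}=m_{ji}$. Compute the $i$-th diagonal entry of $M^2$:
\[
(M^2)_{ii}=\sum_{j} m_{ij}m_{ji}=\sum_j m_{ij}^2 .
\]
Over $\F_2$ the Frobenius map $a\mapsto a^2$ is the identity, so $\sum_j m_{ij}^2=\sum_j m_{ij}$. Alternatively, since $\F_2$ has characteristic $2$, we have $(\sum_j m_{ij})^2=\sum_j m_{ij}^2$, which gives the same conclusion.

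Since $\sum_j m_{ij}=(Me)_i$, this shows $(Me)_i=(M^2)_{ii}=0$ for every $i$. Hence $Me=0$.

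There is no real obstacle here. The only points needing care are the use of symmetry, to replace $m_{ji}$ by $m_{ij}$, and the characteristic-$2$ identity $a^2=a$ on $\F_2$. Only the diagonal of $M^2$ is used, so the hypothesis $M^2=0$ could even be weakened to $\diag(M^2)=0$.

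In the application in this section, I expect the lemma to be applied with $M$ the reduction mod $2$ of an integral symmetric matrix built from $Q$ and the walk matrices. For instance, $M\equiv \ell Q^{\mathrm T}$-type or $\frac{\ell}{2}Q$ products whose square vanishes mod $2$ by orthogonality. The conclusion $Me\equiv 0$ would then be combined with $\rank_2 W(\Sigma)=\lceil n/2\rceil$ to rule out $2\mid\ell(Q)$. That argument, however, belongs to the subsequent lemma and not to this one.
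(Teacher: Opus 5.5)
Your proof is correct and is essentially identical to the paper's: both compute $(M^2)_{ii}=\sum_j m_{ij}m_{ji}=\sum_j m_{ij}^2=\sum_j m_{ij}=(Me)_i$, using symmetry and $a^2=a$ in $\F_2$. Your guess about the application is off, though this does not affect the lemma: the paper applies it to $M=\psi_{\Sigma}(S)$, whose square is $\chi(S;S)$ or $S\,\chi(S;S)$ and hence vanishes by Cayley--Hamilton, not to a matrix built from $Q$.
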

\begin{proof}
    For each $i$,
\begin{center}
    $(M^2)_{ii}=\sum_j m_{ij}m_{ji}
 =\sum_j m_{ij}^2
 =\sum_j m_{ij}=(Me)_i. $
\end{center}
Hence $M^2=0$ implies $Me=0$.
\end{proof}

Suppose that the characteristic polynomial of $\Sigma$ is $
\varphi(x)=x^{n}+c_{1}x^{n-1}+\cdots+c_{n-1}x+c_{n}.$
Define $\psi_{\Sigma}(x)\in\mathbb{F}_2[x]$ by
\begin{equation}\label{eq:psi-2}
\psi_{\Sigma}(x):=
\begin{cases}
x^m+c_2x^{m-1}+c_4x^{m-2}+\cdots+c_{2m},
& n=2m,\\[2mm]
x^{m+1}+c_2x^m+c_4x^{m-1}+\cdots+c_{2m}x,
& n=2m+1.
\end{cases}
\end{equation}

\begin{lemma}[cf.~\cite{Wang2017}]\label{lem:psi-even}
Let $\Sigma\in \mathcal{F}_{n}$ with skew-adjacency matrix $S$. Then $\psi_{\Sigma}(S)e=0$ over $\F_2$.
\end{lemma}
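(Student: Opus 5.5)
The plan is to reduce everything modulo $2$ and apply Lemma~\ref{lem: Me=0} to the matrix $M=\psi_\Sigma(S)$ over $\F_2$. Two facts are needed: $M$ is symmetric over $\F_2$, and $M^2=0$ over $\F_2$. Both should follow from skew-symmetry of $S$ together with the Frobenius identity $g(x)^2=g(x^2)$ in $\F_2[x]$. Controllability does not appear to be needed.

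\emph{Step 1 (odd coefficients vanish).} Since $S^{\rm T}=-S$, we have
\[
\chi(S;-x)=\det(-xI-S)=\det\bigl((-xI-S)^{\rm T}\bigr)=\det(-xI+S)=(-1)^n\chi(S;x).
\]
Hence the integer coefficients satisfy $c_k=0$ for all odd $k$. So
\[
\varphi(x)=x^n+c_2x^{n-2}+c_4x^{n-4}+\cdots.
\]
If $n=2m$, then $\varphi(x)=h(x^2)$ with $h(y)=y^m+c_2y^{m-1}+\cdots+c_{2m}$. If $n=2m+1$, then $\varphi(x)=x\,h(x^2)$ with the same $h$.

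\emph{Step 2 (squaring $\psi_\Sigma$ in $\F_2[x]$).} Reduce modulo $2$. In $\F_2[x]$ we have $g(x)^2=g(x^2)$, because $a^2=a$ for $a\in\F_2$ and cross terms carry a factor $2$.
\begin{itemize}
\item[] For $n=2m$, we have $\psi_\Sigma=\bar h$, so $\psi_\Sigma(x)^2=\bar h(x^2)=\bar\varphi(x)$.
\item[] For $n=2m+1$, we have $\psi_\Sigma(x)=x\,\bar h(x)$, so $\psi_\Sigma(x)^2=x^2\bar h(x^2)=x\,\bar\varphi(x)$.
\end{itemize}
By Cayley--Hamilton, $\varphi(S)=0$ over $\mathbb Z$, hence also over $\F_2$. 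In both cases it follows that $\psi_\Sigma(S)^2=0$ over $\F_2$.

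\emph{Step 3 (conclusion).} Over $\F_2$ we have $-1=1$, so $S\equiv S^{\rm T}\pmod 2$, and $S$ is symmetric over $\F_2$. Any polynomial in a symmetric matrix is symmetric, so $M=\psi_\Sigma(S)$ is symmetric over $\F_2$. By Step 2, $M^2=0$. Lemma~\ref{lem: Me=0} then gives $Me=\psi_\Sigma(S)e=0$ over $\F_2$.

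The only step requiring care is the identity $\psi_\Sigma^2=\bar\varphi$ (or $x\bar\varphi$) in Step 2. It depends on first establishing over $\mathbb Z$, before reducing modulo $2$, that the odd-indexed coefficients $c_k$ vanish. Reducing first would lose this, since over $\F_2$ the matrix $S$ is merely symmetric.
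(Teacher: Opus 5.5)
Your proof is correct and follows the paper's argument. Modulo $2$, $\psi_\Sigma(S)$ is symmetric; the vanishing of the odd-indexed coefficients gives $\psi_\Sigma(x)^2=\chi(S;x)$ or $x\chi(S;x)$ in $\F_2[x]$; Cayley--Hamilton then gives $\psi_\Sigma(S)^2=0$; and the lemma that a symmetric $M$ over $\F_2$ with $M^2=0$ satisfies $Me=0$ finishes the proof. You also make explicit two points the paper leaves tacit, namely that the odd coefficients vanish already over $\mathbb{Z}$ by skew-symmetry, and that $g(x)^2=g(x^2)$ in $\F_2[x]$.
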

\begin{proof}
Modulo $2$, the matrix $S$ is symmetric, and hence every polynomial in $S$ is
symmetric.  Since the odd-indexed coefficients of $\chi(S;x)$ vanish,
by Eq.~\eqref{eq:psi-2}, we have
\[
\psi_\Sigma(x)^2=
\begin{cases}
\chi(S;x), & n=2m,\\
x\chi(S;x), & n=2m+1.
\end{cases}
\]
By the Cayley--Hamilton theorem, it follows in both cases that
$\psi_{\Sigma}(S)^2=0$ over $\mathbb{F}_2$. Since $\psi_{\Sigma}(S)$ is symmetric,
it follows from Lemma~\ref{lem: Me=0} that
$\psi_{\Sigma}(S)e=0$.
\end{proof}

\begin{proposition}\label{prop:exclude-two}
Let $\Sigma\in\mathcal{F}_n$ satisfy the assumptions of
Theorem~\ref{thm:main}. Then every $Q\in\mathcal Q(\Sigma)$ has odd level.
\end{proposition}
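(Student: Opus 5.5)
We reduce the claim to the prime-exclusion theorem (Theorem~\ref{lem:prime-exclusion}) with $p=2$. Fix $Q\in\mathcal Q(\Sigma)$ with $Q^{\rm T}S(\Sigma)Q=S(\Delta)$. If $2\nmid d_n(W(\Sigma))$, then $2\nmid\ell(Q)$ by Lemma~\ref{l dn}, and we are done. So assume $2\mid d_n$. Since $d_n$ is square-free, hypothesis~1 of Theorem~\ref{lem:prime-exclusion} holds. It remains to verify hypothesis~2, namely $f_2(\Sigma;x)=f_2(\Delta;x)$.

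The idea is to show that $f_2(\Sigma;x)=\psi_\Sigma(x)$, and likewise for $\Delta$. Note first that $\deg\psi_\Sigma=\lceil n/2\rceil$ in both parity cases of Eq.~\eqref{eq:psi-2}. By Lemma~\ref{lem:psi-even}, $\psi_\Sigma(S)e=0$ over $\F_2$, so $f_2(\Sigma;x)$ divides $\psi_\Sigma(x)$. By Lemma~\ref{lem:p-main-degree}, $\deg f_2(\Sigma;x)=\rank_2W(\Sigma)=\lceil n/2\rceil$. Both polynomials are monic of the same degree, and one divides the other, so they are equal: $f_2(\Sigma;x)=\psi_\Sigma(x)$.

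For $\Delta$, recall that $\psi_\Delta$ is built from the coefficients of $\chi(S(\Delta);x)=\chi(S(\Sigma);x)$, so $\psi_\Delta=\psi_\Sigma$. Next, $Q^{\rm T}W(\Sigma)=W(\Delta)$ gives $\det W(\Delta)=\pm\det W(\Sigma)\neq0$. Hence $\Delta\in\mathcal F_n$, and Lemma~\ref{lem:psi-even} applies to give $\psi_\Delta(S(\Delta))e=0$. We also need $\rank_2W(\Delta)=\rank_2W(\Sigma)$. Here $Q$ is rational but possibly of even level, so it cannot simply be reduced mod $2$. Instead, the equality follows from $W(\Delta)=Q^{\rm T}W(\Sigma)$ and $W(\Sigma)=QW(\Delta)$: these show that $W(\Sigma)$ and $W(\Delta)$ have the same Smith normal form. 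Indeed, $d_n(W(\Delta))$ is controlled by $d_n(W(\Sigma))$ in the standard way, but more cleanly, we check that they have the same invariant factors. Since $\ell Q$ is integral and $\ell\mid d_n$, the rank over $\F_2$ is read off from the number of invariant factors divisible by $2$.

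This rank equality is the main obstacle. What I would try first is to avoid it entirely by a degree argument on $\Delta$ alone. Lemma~\ref{lem:psi-even} already gives $\deg f_2(\Delta;x)\le\lceil n/2\rceil$. In the other direction, $W(\Delta)$ mod $2$ is formed from powers of a symmetric matrix, and one can argue via $\psi_\Delta(S(\Delta))^2=0$ that $\rank_2W(\Delta)\ge\lceil n/2\rceil$ is not automatic. So the honest route is the known fact (from \cite{QiuWangWang}) that $\rank_2 W\le\lceil n/2\rceil$ always holds, combined with equality of invariant factors.

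For that last point, note that $\rank_2$ is determined by $v_2$ of the Smith invariants, and $v_2(\det W(\Delta))=v_2(\det W(\Sigma))$. This determinant equality alone does not give equal nullity, so I would argue as follows. If $\ell(Q)$ were even, one could run the prime-exclusion construction in reverse. Alternatively, apply Lemma~\ref{l dn} symmetrically, with $Q^{\rm T}$ taking $\Delta$ to $\Sigma$, to get $\ell\mid d_n(W(\Delta))$. Once $\deg f_2(\Delta;x)=\lceil n/2\rceil$ is established, the same divisibility argument as for $\Sigma$ gives $f_2(\Delta;x)=\psi_\Delta(x)=\psi_\Sigma(x)=f_2(\Sigma;x)$. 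Theorem~\ref{lem:prime-exclusion} then yields $2\nmid\ell(Q)$.
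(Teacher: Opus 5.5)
Your handling of $\Sigma$ matches the paper. From $\psi_\Sigma(S)e=0$ and $\deg\psi_\Sigma=\lceil n/2\rceil=\rank_2W(\Sigma)=\deg f_2(\Sigma;x)$ you get $f_2(\Sigma;x)=\psi_\Sigma(x)$. For $\Delta$, the identities $\psi_\Delta=\psi_\Sigma$ and $\psi_\Delta(S(\Delta))e=0$ give $\deg f_2(\Delta;x)\le\lceil n/2\rceil$, i.e.\ $\nullity_2W(\Delta)\ge\lfloor n/2\rfloor$.

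The gap is the reverse inequality. You correctly flag it as the crux, but you never prove it, and your closing sentence (``Once $\deg f_2(\Delta;x)=\lceil n/2\rceil$ is established\ldots'') simply assumes it. None of the routes you sketch supplies it:
\begin{itemize}
\item The assertion that $W(\Delta)=Q^{\rm T}W(\Sigma)$ and $W(\Sigma)=QW(\Delta)$ force equal Smith normal forms does not follow. $Q$ is only rational, and rational orthogonal equivalence does not preserve invariant factors in general. For example, with $Q^{\rm T}=\frac15\left(\begin{smallmatrix}3&-4\\4&3\end{smallmatrix}\right)$, the matrix $Q^{\rm T}(5I)$ has Smith normal form $\diag(1,25)$, while $5I$ has $\diag(5,5)$.
\item The known bound $\rank_2W\le\lceil n/2\rceil$ is the direction you already have, not the one you need.
\item $\ell(Q)\mid d_n(W(\Delta))$ says nothing about $\nullity_2W(\Delta)$.
\item Running the prime-exclusion construction ``in reverse'' presupposes $f_2(\Delta;x)=f_2(\Sigma;x)$, which is what you are trying to prove.
\end{itemize}

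The missing step is the determinant argument you dismissed. The determinant alone does not fix the nullity, but it gives exactly the upper bound you lack. For a nonsingular integral matrix $M$, $\nullity_2M$ is the number of invariant factors of $M$ divisible by $2$, so $\nullity_2M\le v_2(\det M)$. For $\Sigma$, exactly $\nullity_2W(\Sigma)=\lfloor n/2\rfloor$ invariant factors are even. Each divides the square-free $d_n$, so $v_2(\det W(\Sigma))=\lfloor n/2\rfloor$. Since $|\det W(\Delta)|=|\det W(\Sigma)|$, you get $\nullity_2W(\Delta)\le\lfloor n/2\rfloor$. Combined with your lower bound, this yields $\rank_2W(\Delta)=\lceil n/2\rceil=\deg f_2(\Delta;x)$. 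Hence $f_2(\Delta;x)=\psi_\Delta(x)=\psi_\Sigma(x)=f_2(\Sigma;x)$, and Theorem~\ref{lem:prime-exclusion} applies. This is how the paper closes the argument. With this inequality inserted and the unjustified Smith-normal-form claim removed, the rest of your proof goes through.
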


\begin{proof}
Let $r=\rank_2W(\Sigma)=\lceil n/2\rceil$ and
$k=n-r=\lfloor n/2\rfloor$. Fix $Q\in\mathcal Q(\Sigma)$, and let $\Delta\in\mathcal C(\Sigma)$ be such that $
Q^{\rm T}S(\Sigma)Q=S(\Delta).$  Since $d_n(W(\Sigma))$ is square-free,
exactly $k$ invariant factors of $W(\Sigma)$ are even, and hence
$v_2(\det W(\Sigma))=k$.

 Since
$\Sigma$ and $\Delta$ have the same characteristic polynomial, it implies that
$\psi_\Sigma(x)=\psi_\Delta(x)$. By Lemma~\ref{lem:psi-even},
$\psi_\Sigma(S(\Sigma))e=0$ over $\mathbb F_2$. Since
$\deg\psi_\Sigma=r=\deg f_2(\Sigma;x)$ by
Lemma~\ref{lem:p-main-degree}, then we have
$f_2(\Sigma;x)=\psi_\Sigma(x)$.

Similarly, Lemma~\ref{lem:psi-even} gives
$\psi_\Delta(S(\Delta))e=0$, and hence
$f_2(\Delta;x)\mid\psi_\Delta(x)$. Therefore, we obtain
$\rank_2W(\Delta)=\deg f_2(\Delta;x)\le r$, which indicates
$\nullity_2W(\Delta)\ge k$. On the other hand, it follows from Eq.~\eqref{eq:walk-intertwine} that
$|\det W(\Delta)|=|\det W(\Sigma)|$, which implies that
$v_2(\det W(\Delta))=k$, and thus we have
$\nullity_2W(\Delta)\le v_2(\det W(\Delta))=k$. Therefore,
$\nullity_2W(\Delta)=k$ and $\rank_2W(\Delta)=r$.
By Lemma~\ref{lem:p-main-degree},
$\deg f_2(\Delta;x)=r$. Since
$f_2(\Delta;x)\mid\psi_\Delta(x)$ and both are monic of degree $r$, then we get
\[
f_2(\Delta;x)=\psi_\Delta(x)=\psi_\Sigma(x)=f_2(\Sigma;x).
\]

Thus, the assumptions of Theorem~\ref{lem:prime-exclusion} are satisfied for
$p=2$, and consequently $2\nmid\ell(Q)$.
\end{proof}

\subsection{$p$ is odd}

Throughout this section, $p$ is a fixed odd prime. All polynomial factorizations are taken over $\F_p$, while eigenvectors may be taken over the algebraic closure $\overline{\F}_p$. We simply write $K=\Ker_p(W^{\rm T})$.
For a monic polynomial $f\in\F_p[x]$ of degree $r$, define
\[
 f^\star(x)=(-1)^r f(-x).
\]
Then $f^\star$ is monic and $(f^\star)^\star=f$.

We now return to the invariant polynomial $\Phi_p(\Sigma;x)$. The next observation records the symmetry of its roots.

\begin{lemma}\label{lem:root-symmetry}
If $\lambda$ is a root of
$\Phi_p(\Sigma;x)$ over $\overline{\mathbb{F}}_p$, then $-\lambda$ is also a root of
$\Phi_p(\Sigma;x)$.
\end{lemma}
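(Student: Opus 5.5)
It suffices to show that each of $\chi(S;x)$ and $\chi(S+J;x)$ is, up to sign, invariant under $x\mapsto -x$ over $\F_p$. Then the zero set of each, and hence of their gcd, is closed under negation. For $\chi(S;x)$ this is immediate. Since $S^{\rm T}=-S$, we have
\[
\chi(S;x)=\det(xI-S)=\det\bigl((xI-S)^{\rm T}\bigr)=\det(xI+S)=(-1)^n\det(-xI-S)=(-1)^n\chi(S;-x),
\]
so $\chi(S;-\lambda)=\pm\chi(S;\lambda)=0$ whenever $\chi(S;\lambda)=0$. The identity holds over $\mathbb{Z}$ and hence over $\F_p$.

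For $\chi(S+J;x)$ the matrix $S+J$ is not skew-symmetric, so I will reduce it to $\chi(S;x)$ and $\chi(S-J;x)$. Since $J=ee^{\rm T}$ has rank one, the matrix determinant lemma gives
\[
\chi(S+J;x)=\det(xI-S-ee^{\rm T})=\chi(S;x)\bigl(1-e^{\rm T}(xI-S)^{-1}e\bigr).
\]
Transposing the scalar $e^{\rm T}(xI-S)^{-1}e$ gives $e^{\rm T}(xI+S)^{-1}e$, and $e^{\rm T}(-xI-S)^{-1}e=-e^{\rm T}(xI+S)^{-1}e$. From these one obtains $\chi(S+J;-x)=(-1)^n\chi(S-J;x)$. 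Using $\chi(S;-x)=(-1)^n\chi(S;x)$, the relation can also be written as
\[
\chi(S+J;x)+\chi(S-J;x)=2\chi(S;x).
\]
This rational-function identity can be written polynomially as $\chi(S+J;x)=\chi(S;x)-e^{\rm T}\operatorname{adj}(xI-S)e$, so it holds over $\mathbb{Z}[x]$ and therefore over $\F_p$.

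Now let $\lambda\in\overline{\F}_p$ be a common root of $\chi(S;x)$ and $\chi(S+J;x)$. The displayed sum identity gives $\chi(S-J;\lambda)=2\chi(S;\lambda)-\chi(S+J;\lambda)=0$. Hence $\chi(S+J;-\lambda)=\pm\chi(S-J;\lambda)=0$, and also $\chi(S;-\lambda)=0$. Thus $-\lambda$ is a common root, i.e.\ a root of $\Phi_p(\Sigma;x)$. Only the identity, not division by $2$, is used, although $p$ is odd here anyway.

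The main obstacle is deriving the sign-twisted relation between $\chi(S+J;-x)$ and $\chi(S-J;x)$ cleanly over $\F_p$. I plan to do it directly with determinants, via
\[
\det(-xI-S-J)=(-1)^n\det\bigl((xI+S+J)^{\rm T}\bigr)=(-1)^n\det(xI-S+J)=(-1)^n\chi(S-J;x),
\]
using $J^{\rm T}=J$ and $S^{\rm T}=-S$. This avoids inverses entirely, and the rank-one lemma is needed only for the additive identity.
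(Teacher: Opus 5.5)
Your argument is correct and is essentially the paper's. Since $\chi(S\mp J;x)=\chi(S;x)\mp h(x)$ with $h(x)=e^{\rm T}\operatorname{adj}(xI-S)e$, your transpose identity $\chi(S+J;-x)=(-1)^n\chi(S-J;x)$ is just a repackaging of the paper's parity relation $h(-x)=(-1)^{n-1}h(x)$, and the conclusion follows the same way from $h(\lambda)=0$.

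One correction: your opening claim that $\chi(S+J;x)$ is itself invariant up to sign under $x\mapsto -x$ is false. Already for $n=1$ it is $x-1$, while $\chi(S+J;-x)=-x-1$. You never actually use that claim, though, since your argument only needs the symmetry on the common zero set.
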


\begin{proof}
Let $e$ denote the all-one column vector, and write $J=ee^{\mathrm T}$. Note that
\[
\det(A-uv^{\mathrm T})
=
\det(A)-v^{\mathrm T}\operatorname{adj}(A)u.
\]
Define $h(x)=e^{\mathrm T}\operatorname{adj}(xI-S)e.$ Then
\begin{equation}\label{eq:rank-one-identity}
\begin{aligned}
\chi(S+J;x)
&=\det\bigl(xI-(S+J)\bigr)\\
&=\det\bigl((xI-S)-ee^{\mathrm T}\bigr)\\
&=\chi(S;x)-h(x).
\end{aligned}
\end{equation}
Since $S^{\mathrm T}=-S$, we have
\begin{equation}\label{eq:chi-parity}
\begin{aligned}
\chi(S;-x)
&=\det(-xI-S)\\
&=(-1)^n\det(xI+S)\\
&=(-1)^n\det\bigl((xI-S)^{\mathrm T}\bigr)\\
&=(-1)^n\chi(S;x).
\end{aligned}
\end{equation}
Similarly, as
$
\operatorname{adj}(cA)
=
c^{n-1}\operatorname{adj}(A)$
and
$
\operatorname{adj}(A^{\mathrm T})
=
\operatorname{adj}(A)^{\mathrm T},$
we obtain
\begin{equation}\label{eq:h-parity}
\begin{aligned}
h(-x)
&=e^{\mathrm T}\operatorname{adj}(-xI-S)e\\
&=(-1)^{n-1}
  e^{\mathrm T}
  \operatorname{adj}\bigl((xI-S)^{\mathrm T}\bigr)e\\
&=(-1)^{n-1}
  e^{\mathrm T}
  \operatorname{adj}(xI-S)^{\mathrm T}e\\
&=(-1)^{n-1}
  e^{\mathrm T}
  \operatorname{adj}(xI-S)e\\
&=(-1)^{n-1}h(x).
\end{aligned}
\end{equation}
The penultimate equality follows from the fact that
$e^{\mathrm T}\operatorname{adj}(xI-S)e$ is a scalar and is therefore
equal to its transpose.

Since $\lambda$ is a root of
$\Phi_p(\Sigma;x)$ over $\overline{\mathbb{F}}_p$, the definition of $\Phi_p(\Sigma;x)$ implies that $\lambda$ is a common root of $\chi(S;x)$ and
$\chi(S+J;x)$. Hence, we have $\chi(S;\lambda)=0$ and
$\chi(S+J;\lambda)=0.$
It follows from Eq.~\eqref{eq:rank-one-identity} that \[
h(\lambda)
=
\chi(S;\lambda)-\chi(S+J;\lambda)
=
0.\]
By Eqs.~\eqref{eq:chi-parity} and ~\eqref{eq:h-parity},
we have
\[
\chi(S+J;-\lambda)
=
\chi(S;-\lambda)-h(-\lambda)
=
0.
\]
Thus, $-\lambda$ is also a common root of $\chi(S;x)$ and
$\chi(S+J;x)$, and hence it is a root of
$\Phi_p(\Sigma;x)$.
\end{proof}

\begin{lemma}\label{factor-star}
Under the condition~\textup{(ii)} of Theorem~\ref{thm:main}, if $\sfp(\Phi_p(\Sigma;x))=\varphi_1(x)\cdots\varphi_s(x)$ is its irreducible factorization, then  $\varphi_i^{\star}=\varphi_i$ for $1\le i\le s$. Moreover, $\sfp(\Phi_p(\Sigma;x))^{\star}=\sfp(\Phi_p(\Sigma;x))$.
\end{lemma}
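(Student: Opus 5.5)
We use Lemma~\ref{lem:root-symmetry}, which says the root set of $\Phi_p(\Sigma;x)$ over $\overline{\mathbb{F}}_p$ is closed under $\lambda\mapsto-\lambda$, together with the fact that $\star$ permutes the monic irreducible polynomials over $\mathbb{F}_p$. The key observation is that for a monic irreducible $\varphi$ of degree $d$, the polynomial $\varphi^\star(x)=(-1)^d\varphi(-x)$ is again monic and irreducible, because $g(x)\mapsto g(-x)$ is a ring automorphism of $\mathbb{F}_p[x]$. Its roots are exactly $\{-\lambda:\varphi(\lambda)=0\}$.

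Fix an index $i$ and a root $\lambda\in\overline{\mathbb{F}}_p$ of $\varphi_i$. Since $\varphi_i$ divides $\sfp(\Phi_p(\Sigma;x))$, $\lambda$ is a root of $\Phi_p(\Sigma;x)$. By Lemma~\ref{lem:root-symmetry}, $-\lambda$ is also a root of $\Phi_p(\Sigma;x)$, hence a root of its square-free part, which has the same roots. So $-\lambda$ is a root of some $\varphi_j$, which means $\lambda$ is a root of $\varphi_j(-x)$.

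Consequently $\varphi_i(x)$ and $\varphi_j(-x)$ share the root $\lambda$. Their gcd over $\mathbb{F}_p$ is therefore nontrivial, since the gcd is unchanged by field extension. Condition~(ii) then forces $j=i$. Thus $\varphi_i(-\lambda)=0$, so $\varphi_i$ and $\varphi_i^\star$ share the root $-\lambda$. Both are monic irreducible of the same degree, so $\varphi_i^\star=\varphi_i$.

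For the second assertion, $\star$ is multiplicative on monic polynomials: $(fg)^\star(x)=(-1)^{\deg f+\deg g}f(-x)g(-x)=f^\star g^\star$. Hence
\[
\sfp(\Phi_p(\Sigma;x))^\star=\prod_{i=1}^s\varphi_i^\star=\prod_{i=1}^s\varphi_i=\sfp(\Phi_p(\Sigma;x)).
\]

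The only step requiring care is passing from a common root in $\overline{\mathbb{F}}_p$ to a nontrivial gcd in $\mathbb{F}_p[x]$, and then from a shared root to equality of irreducible polynomials. Both follow from the standard fact that a monic irreducible polynomial over $\mathbb{F}_p$ is the minimal polynomial of each of its roots.
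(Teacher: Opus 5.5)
Your proof is correct and matches the paper's argument. Like the paper, you use Lemma~\ref{lem:root-symmetry} to place $-\lambda$ in some $\varphi_j$, invoke condition~(ii) to force $j=i$, conclude $\varphi_i^\star=\varphi_i$ because both are monic irreducible with a common root, and multiply over $i$; the facts you spell out (that $\star$ preserves irreducibility and is multiplicative, and that a common root in $\overline{\mathbb{F}}_p$ yields a nontrivial gcd over $\mathbb{F}_p$) are the ones the paper uses implicitly.
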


\begin{proof}
Fix $i$ and let $\lambda$ be a root of $\varphi_i$. By Lemma~\ref{lem:root-symmetry}, we know that  $-\lambda$ is a root of some factor $\varphi_j$. If $j\neq i$, then $\lambda$ is a common root of $\varphi_i(x)$ and $\varphi_j(-x)$, contradicting Theorem~\ref{thm:main} condition~\textup{(ii)}. Hence $j=i$. It follows that $\varphi_i$ and $\varphi_i^{\star}$ share a common root. Both are monic irreducible polynomials of the same degree, so $\varphi_i^\star=\varphi_i$. Multiplying over $i$ gives the final assertion.
\end{proof}

\begin{lemma}\label{WS}
For any $t\in\mathbb{F}_p$, the subspace
$K$ is invariant under $S+tJ$.
\end{lemma}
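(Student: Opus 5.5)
The plan is to unpack the definition $K=\Ker_p(W^{\rm T})=\{u\in\F_p^n: u^{\rm T}W=0\}$ and verify the two needed properties directly. Membership $u\in K$ means exactly that $u^{\rm T}S^k e=0$ over $\F_p$ for $k=0,1,\ldots,n-1$. We would first upgrade this to all $k\ge 0$. By the Cayley--Hamilton theorem over $\F_p$, $S^n$ is an $\F_p$-linear combination of $I,S,\ldots,S^{n-1}$. Induction on $k$ then shows that each $S^ke$ lies in the column space of $W$, so $u^{\rm T}S^ke=0$ for every $k\ge0$.

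\textbf{Invariance under $S$.} Take $u\in K$. We need $(Su)^{\rm T}S^ke=u^{\rm T}S^{\rm T}S^ke=0$ for $0\le k\le n-1$. Skew-symmetry persists modulo $p$, so $S^{\rm T}=-S$ and
\[
(Su)^{\rm T}S^ke=-u^{\rm T}S^{k+1}e=0
\]
by the extended vanishing above. Hence $SK\subseteq K$.

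\textbf{Invariance under $J$.} Write $J=ee^{\rm T}$. Then $Ju=e\,(e^{\rm T}u)=(u^{\rm T}e)\,e$. Since $e$ is the first column of $W$, we have $u^{\rm T}e=0$ for $u\in K$, so $Ju=0$. Thus $K$ is annihilated by $J$, and in particular $J$-invariant.

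Combining the two steps, $(S+tJ)u=Su\in K$ for every $t\in\F_p$. The only point needing care is the Cayley--Hamilton extension to $k=n$ in the first step, which is routine. No genuine obstacle is expected. The observation that $J$ acts as zero on $K$ will presumably also be the useful fact later, since it gives $(S+tJ)|_K=S|_K$.
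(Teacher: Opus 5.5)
Your proposal is correct and follows essentially the same route as the paper: Cayley--Hamilton supplies the missing vanishing at exponent $n$, skew-symmetry $S^{\rm T}=-S$ shifts the power to show $W^{\rm T}S\xi=0$, and $e^{\rm T}\xi=0$ gives $J\xi=0$ on $K$. The only cosmetic difference is that the paper phrases the Cayley--Hamilton step as $e^{\rm T}S^n\xi=0$ for $\xi\in K$, whereas you phrase it as $S^ne$ lying in the column space of $W$; the two are equivalent.
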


\begin{proof}
Let $\xi\in K$. Then
$e^{\rm T}(S^{\rm T})^k\xi=0$ for $k=0,1,\ldots,n-1$.
Since $S^{\rm T}=-S$, it follows that
$e^{\rm T}S^k\xi=0$ for $k=0,1,\ldots,n-1$. Let
$\chi(S;x)=x^n+a_{n-1}x^{n-1}+\cdots+a_1x+a_0$.
By the Cayley--Hamilton theorem, we have
\[
S^n=-\bigl(a_{n-1}S^{n-1}+\cdots+a_1S+a_0I\bigr),
\]
and hence $e^{\rm T}S^n\xi=0$. Therefore,
$e^{\rm T}(S^{\rm T})^kS\xi=0$ for $k=0,1,\ldots,n-1$, which gives
$W^{\rm T}S\xi=0$.

Moreover, $e^{\rm T}\xi=0$, and hence $J\xi=ee^{\rm T}\xi=0$. Thus
\[
W^{\rm T}(S+tJ)\xi
=
W^{\rm T}S\xi+tW^{\rm T}J\xi
=
0.
\]
Therefore, $(S+tJ)\xi\in K$, and the result follows.
\end{proof}

\begin{lemma}\label{divides}

  We have $\chi(S|_K;x)$ divides $\Phi_p(\Sigma;x)$. Moreover,
$\sfp(\Phi_p(\Sigma;x))$ divides $\chi(S|_K;x)$ whenever either condition~\textup{(ii)} of Theorem~\ref{thm:main} holds or $
\chi(S|_K;x)^\star=\chi(S|_K;x).$
\end{lemma}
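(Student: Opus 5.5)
The plan is to restrict $S$ and $S+J$ to $K$ and to $K^\perp$, using Lemma~\ref{fpoly} and Lemma~\ref{WS}, and to extract the common roots of $\chi(S;x)$ and $\chi(S+J;x)$ from a rank-one perturbation argument on $K^\perp$.

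\emph{First assertion.} By Lemma~\ref{WS}, $K$ is invariant under both $S$ and $S+J$. For $\xi\in K$ we have $e^{\rm T}\xi=0$, hence $J\xi=0$ and $(S+J)|_K=S|_K$. Lemma~\ref{fpoly} applies to an arbitrary matrix. Applying it to $S$ and to $S+J$ with $U=K$ shows that $\chi(S|_K;x)$ divides both $\chi(S;x)$ and $\chi(S+J;x)$, and therefore divides their gcd $\Phi_p(\Sigma;x)$.

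\emph{Second assertion, key claim.} I will show that for every root $\lambda\in\overline{\F}_p$ of $\Phi_p(\Sigma;x)$, either $\lambda$ or $-\lambda$ is a root of $\chi(S|_K;x)$. Since $K=\Ker_p(W^{\rm T})$, we have $K^\perp=\operatorname{col}_p W=:C$, which has dimension $r$. Lemma~\ref{fpoly} gives
\[
\chi(S;x)=\chi(S|_K;x)\,\chi(-S|_C;x),\qquad
\chi(S+J;x)=\chi(S|_K;x)\,\chi((-S+J)|_C;x).
\]
Suppose $\lambda$ is a common root of $\chi(S;x)$ and $\chi(S+J;x)$ but not a root of $\chi(S|_K;x)$. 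Then $\lambda$ is an eigenvalue of both $T:=-S|_C$ and $T+e\,w^{\rm T}$, where $w^{\rm T}$ denotes the functional $v\mapsto e^{\rm T}v$ on $C$ (note that $Jv=e(e^{\rm T}v)$ and $e\in C$). The operator $T$ is cyclic on $C$ with cyclic vector $e$, because $C$ is spanned by $e,Se,\dots,S^{r-1}e$. I will use the standard rank-one (Hautus-type) argument.
\begin{itemize}
\item If $\lambda$ is an eigenvalue of both $T$ and $T+ew^{\rm T}$, then either some left eigenvector $y$ of $T$ for $\lambda$ satisfies $y^{\rm T}e=0$, or some eigenvector $u$ of $T$ for $\lambda$ satisfies $w^{\rm T}u=0$. (If $Tu=\lambda u$ with $w^{\rm T}u\neq0$ and $y^{\rm T}e\neq0$ for the left eigenvector $y$, one checks that $\lambda$ cannot be an eigenvalue of $T+ew^{\rm T}$, using that the $\lambda$-eigenspaces are one-dimensional for a cyclic operator.)
\item The first alternative is impossible. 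A left eigenvector annihilating $e$ would annihilate all $T^ke$, which span $C$.
\item Hence there is $u\in C\otimes\overline{\F}_p$ with $Su=-\lambda u$ and $e^{\rm T}u=0$. Then $e^{\rm T}S^ku=(-\lambda)^ke^{\rm T}u=0$ for all $k$, so $W^{\rm T}u=0$ and $u\in K\otimes\overline{\F}_p$.
\end{itemize}
Thus $-\lambda$ is an eigenvalue of $S|_K$, which proves the claim.

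\emph{Conclusion.} If $\chi(S|_K;x)^\star=\chi(S|_K;x)$, the roots of $\chi(S|_K;x)$ are closed under negation, so $\lambda$ itself is a root. Under condition~(ii), let $\lambda$ be a root of $\varphi_i$. Lemma~\ref{factor-star} gives $\varphi_i^\star=\varphi_i$, so $\varphi_i$ is the minimal polynomial of $-\lambda$ as well. Hence if $-\lambda$ is a root of $\chi(S|_K;x)\in\F_p[x]$, then $\varphi_i\mid\chi(S|_K;x)$, and again $\lambda$ is a root. In either case each irreducible factor $\varphi_i$ divides $\chi(S|_K;x)$. Since the $\varphi_i$ are distinct irreducibles, their product $\sfp(\Phi_p(\Sigma;x))$ divides $\chi(S|_K;x)$.

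The main obstacle is the rank-one perturbation step, namely making rigorous that a common eigenvalue of $T$ and $T+ew^{\rm T}$ forces a right eigenvector killed by $w^{\rm T}$ once cyclicity of $e$ is used. I would verify this with the determinant identity
\[
\det(x-T-ew^{\rm T})=\det(x-T)-w^{\rm T}\operatorname{adj}(x-T)e,
\]
together with the fact that $\operatorname{adj}(\lambda-T)$ has rank one, equal to $u\,y^{\rm T}$, for cyclic $T$.
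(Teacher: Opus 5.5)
Your proof is correct. The first assertion is handled exactly as in the paper, but for the key dichotomy ($\lambda$ or $-\lambda$ is a root of $\chi(S|_K;x)$) you take a different route.

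The paper works in the whole space over $\overline{\F}_p$. It takes $\eta$ with $S\eta=-\lambda\eta$ and $\xi$ with $(S+J)\xi=\lambda\xi$, and uses $S^{\rm T}=-S$ to obtain $0=\xi^{\rm T}(\lambda I+S)\eta=\eta^{\rm T}(\lambda I-S)\xi=(e^{\rm T}\eta)(e^{\rm T}\xi)$. Whichever of $\eta,\xi$ is orthogonal to $e$ is then an eigenvector of $S$ lying in $K$, for $-\lambda$ or $\lambda$ respectively.

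You instead split off $K$ via Lemma~\ref{fpoly}, applied to both $S$ and the non-skew matrix $S+J$. This reduces the problem to a common eigenvalue of $T=-S|_{K^\perp}$ and its rank-one perturbation $T+ew^{\rm T}$. You then use cyclicity of $e$ on $K^\perp=\operatorname{col}_pW$ to exclude the left-eigenvector alternative.

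The paper's version is shorter and needs neither the factorization nor cyclicity, because it simply accepts both alternatives; each one already places an eigenvector in $K$. Yours costs more machinery but shows directly where the extra common roots come from. If $\lambda$ is a common root of $\chi(S;x)$ and $\chi(S+J;x)$ that is not a root of $\chi(S|_K;x)$, then $-\lambda$ is an eigenvalue of $S$ on the isotropic subspace $K\cap K^\perp$.

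The adjugate computation you single out as the main obstacle can be avoided. Suppose $(T+ew^{\rm T})v=\lambda v$ with $v\neq 0$, and $y$ is a left eigenvector of $T$ for $\lambda$. Then $0=y^{\rm T}(\lambda I-T)v=(y^{\rm T}e)(w^{\rm T}v)$, and $y^{\rm T}e\neq 0$ forces $w^{\rm T}v=0$. So $v$ itself is the required eigenvector. This is essentially the paper's pairing trick carried out inside $K^\perp$.
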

\begin{proof}
Recall $K=\operatorname{Ker}_p(W^{\rm T}).$
By Lemma~\ref{WS}, the space $K$ is invariant under both $S$ and $S+J$. Since any vector
$v\in K$ satisfies $e^{\rm T}v=0$, we have $J|_K=0$. It follows that
$
\chi\left(S|_K;x\right)
=
\chi\left((S+J)|_K;x\right).$
Consequently,
\[
\chi\left(S|_K;x\right)
\mid
\gcd\bigl(\chi(S;x),\chi(S+J;x)\bigr)
=
\Phi_p(\Sigma;x).
\]

It remains to prove the second assertion. Write $
\operatorname{sfp}(\Phi_p(\Sigma;x))
=
\varphi_1(x)\varphi_2(x)\cdots \varphi_s(x),$
where the $\varphi_i$ are distinct monic irreducible polynomials over $\mathbb{F}_p[x]$. Fix $i$, and let $\lambda$ be a root of
$\varphi_i$ over $\overline{\mathbb{F}}_p$. Then $\lambda$ is a common eigenvalue of $S$ and $S+J$, and by Lemma~\ref{lem:root-symmetry}, $-\lambda$ is also an eigenvalue of $S$. Choose nonzero vectors $\eta,\xi$ such that $
 S\eta=-\lambda\eta,$ and
 $(S+J)\xi=\lambda\xi,$ respectively.
Then
\[
0
=
\xi^{\rm T}(\lambda I+S)\eta
=
\eta^{\rm T}(\lambda I-S)\xi
=
\eta^{\rm T}J\xi
=
(e^{\rm T}\eta)(e^{\rm T}\xi).\]
Hence either $e^{\rm T}\eta=0$ or $e^{\rm T}\xi=0$.

If $e^{\mathrm T}\xi=0$, then $J\xi=0$, and hence  $S\xi=\lambda\xi$. Moreover,
as $e^{\mathrm T}(S^{\mathrm T})^k\xi=(-\lambda)^k e^{\mathrm T}\xi=0,$ for every $k$, then we have $\xi\in K$. Thus, $\lambda$ is a root of $\chi(S|_K;x)$.

If $e^{\mathrm T}\eta=0$, then similarly $\eta\in K$, so $-\lambda$ is a root of $\chi(S|_K;x)$. Under condition~\textup{(ii)}, by Lemma~\ref{factor-star}, we know that $-\lambda$ is also a root of the same irreducible factor $\varphi_i$. Alternatively, if $\chi(S|_K;x)^\star=\chi(S|_K;x)$, then $\lambda$ is a root of $\chi(S|_K;x)$ whenever $-\lambda$ is.

Therefore, in either case, $\varphi_i(x)$ and
$\chi\left(S|_K;x\right)$ have a common root in
$\overline{\mathbb{F}}_p$. Since $\varphi_i(x)$ is irreducible over
$\mathbb{F}_p$, we conclude that $\varphi_i(x)\mid \chi\left(S|_K;x\right).$
As $i$ was arbitrary, we obtain \[
\operatorname{sfp}(\Phi_p(\Sigma;x))
\mid
\chi\left(S|_K;x\right).\]
This completes the proof.
\end{proof}


\begin{corollary}\label{co compare}
	Under condition~\textup{(ii)} of Theorem~\ref{thm:main},  $\deg\operatorname{sfp}(\Phi_p(\Sigma; x)) \leq \operatorname{nullity}_p W(\Sigma) \leq \deg \Phi_p(\Sigma; x)$.
\end{corollary}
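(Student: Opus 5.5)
The corollary follows from Lemma~\ref{divides} once we identify $\dim K$ with $\nullity_p W(\Sigma)$, where $K=\Ker_p(W^{\rm T})$.

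First, since $W^{\rm T}$ is the transpose of $W$ over $\F_p$, we have $\rank_p W^{\rm T}=\rank_p W$. Hence
\[
\dim K=n-\rank_p W=\nullity_p W(\Sigma).
\]
By Lemma~\ref{WS}, $K$ is $S$-invariant, so $S|_K$ is a linear operator on a space of dimension $\nullity_p W(\Sigma)$. Its characteristic polynomial $\chi(S|_K;x)$ is therefore monic of degree exactly $\nullity_p W(\Sigma)$.

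Next, Lemma~\ref{divides} gives $\chi(S|_K;x)\mid\Phi_p(\Sigma;x)$ unconditionally. Comparing degrees yields
\[
\nullity_p W(\Sigma)\le\deg\Phi_p(\Sigma;x).
\]
Under condition~(ii), the same lemma gives $\sfp(\Phi_p(\Sigma;x))\mid\chi(S|_K;x)$. Comparing degrees again yields
\[
\deg\sfp(\Phi_p(\Sigma;x))\le\nullity_p W(\Sigma).
\]
These two inequalities together give the chain in the statement.

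The only point needing care is the degenerate case $K=\{0\}$. There $\chi(S|_K;x)=1$, and the divisibility $\sfp(\Phi_p)\mid 1$ forces $\Phi_p(\Sigma;x)$ to be constant, so both inequalities hold trivially. No step here is a real obstacle; the substantive content is entirely in Lemma~\ref{divides}.
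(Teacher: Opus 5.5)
Your proof is correct and matches the paper's argument: identify $\deg\chi(S|_K;x)=\dim K=\nullity_p W(\Sigma)$, then compare degrees in the two divisibilities of Lemma~\ref{divides}. The separate discussion of $K=\{0\}$ is unnecessary, since the degree comparison already covers that case.
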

\begin{proof}
    Note that $\deg\chi(S|_K;x)=\dim K=\operatorname{nullity}_p W(\Sigma)$. The assertion clearly follows from Lemma~\ref{divides}.
\end{proof}


\begin{lemma}\label{main po}
   We have $f_p(\Sigma; x)=\chi(S|_{K^{\bot}};x)$.
\end{lemma}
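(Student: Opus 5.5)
The plan is to identify $K^\perp$ with the column space of $W$ over $\F_p$, which is the cyclic ($S$-Krylov) subspace generated by $e$. On that subspace the restriction of $S$ is a cyclic operator with cyclic vector $e$, so its characteristic polynomial equals its minimal polynomial. That minimal polynomial is exactly the annihilator of $e$, namely $f_p(\Sigma;x)$.

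\emph{Identifying $K^\perp$.} Let $U=\operatorname{Col}_p(W)=\operatorname{Span}\{e,Se,\ldots,S^{n-1}e\}\subseteq\F_p^n$. A vector $u$ lies in $U^\perp$ exactly when $u^{\rm T}S^ke=0$ for all $0\le k\le n-1$, that is, when $W^{\rm T}u=0$. Hence $U^\perp=\Ker_p(W^{\rm T})=K$. Since the standard bilinear form is nondegenerate, $(V^\perp)^\perp=V$ for every subspace $V$, as recalled before Lemma~\ref{fpoly}. Therefore $K^\perp=U$. In particular $\dim K^\perp=\rank_p W=r$.

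\emph{Invariance.} $U$ is $S$-invariant by the Cayley--Hamilton theorem, since $S^ne$ is a combination of $e,Se,\ldots,S^{n-1}e$. This is consistent with Lemma~\ref{WS} at $t=0$ together with Lemma~\ref{fpoly}: $K$ is $S$-invariant, so $K^\perp$ is $S^{\rm T}=-S$-invariant. Thus $S|_{K^\perp}$ is well defined.

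\emph{The minimal polynomial of $S|_U$.} Let $m(x)$ be the minimal polynomial of $S|_U$. Since $e\in U$, we have $m(S)e=0$, so $f_p(\Sigma;x)\mid m(x)$ by the minimality in the definition of the $p$-main polynomial. Conversely, $f_p(\Sigma;S)$ commutes with $S$, so $f_p(\Sigma;S)S^ke=S^kf_p(\Sigma;S)e=0$ for every $k$. Hence $f_p(\Sigma;S)$ vanishes on $U$, and $m\mid f_p(\Sigma;x)$. Both polynomials are monic, so $m=f_p(\Sigma;x)$.

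\emph{Conclusion.} By Lemma~\ref{lem:p-main-degree}, $\deg f_p(\Sigma;x)=r=\dim U$. The minimal polynomial of $S|_U$ divides its characteristic polynomial, and both have degree $\dim U$ and are monic, so they coincide. Therefore
\[
\chi(S|_{K^\perp};x)=\chi(S|_U;x)=f_p(\Sigma;x).
\]

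The only step that needs any care is the identification $K^\perp=\operatorname{Col}_p(W)$. The subtlety is that over $\F_p$ one does not have $U\oplus U^\perp=\F_p^n$ in general. The argument avoids this, because it uses only the double-perp identity, which requires nondegeneracy of the form and not positive definiteness.
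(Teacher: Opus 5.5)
Your proof is correct and follows essentially the same route as the paper. Both arguments use that $K^\perp$ is an $S$-invariant subspace of dimension $\rank_p W(\Sigma)$ containing $e$, so Cayley--Hamilton for $S|_{K^\perp}$ gives $f_p(\Sigma;x)\mid\chi(S|_{K^\perp};x)$, and the degree count of Lemma~\ref{lem:p-main-degree} forces equality. The differences are minor: you identify $K^\perp$ explicitly with the column space of $W$ over $\F_p$, which gives invariance and $e\in K^\perp$ directly without Lemma~\ref{WS}, Lemma~\ref{fpoly} or skew-symmetry; and you pass through the minimal polynomial of $S|_{K^\perp}$, a harmless but unnecessary detour.
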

\begin{proof}
    By Lemma~\ref{WS}, $K$ is $S$-invariant. Since $S^{\rm T}=-S$, Lemma~\ref{fpoly} implies that $K^\perp$ is also $S$-invariant.

    Let $g(x)=\chi(S|_{K^{\bot}};x)$. Note that $\dim K^\perp=n-\nullity_pW(\Sigma)=\rank_pW(\Sigma)$. By Lemma~\ref{lem:p-main-degree}, we know that  $
 \deg f_p(\Sigma;x)=\deg g(x).$  Moreover, $e\in K^\perp$. By the Cayley--Hamilton theorem applied to $S|_{K^\perp}$, we have $g(S)e=0$. Hence, by the minimality of the $p$-main polynomial, we know $f_p(\Sigma;x)\mid g(x).$
Since the two monic polynomials have the same degree, they are equal.
\end{proof}
\begin{lemma}\label{mp}
    Let $p$ be an odd prime. Suppose that $
\deg\sfp(\Phi_p(\Sigma;x))=\nullity_pW(\Sigma)$
and that condition~\textup{(ii)} of Theorem~\ref{thm:main} holds for $p$.
Then
\[
f_p(\Sigma;x)=\frac{\chi(S;x)}{\sfp(\Phi_p(\Sigma;x))}.
\]
\end{lemma}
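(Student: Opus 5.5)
We combine Lemma~\ref{main po} with the factorization in Corollary~\ref{factorization}, applied to the $S$-invariant subspace $K^\perp$. By Lemma~\ref{WS}, $K$ is $S$-invariant. By Lemma~\ref{fpoly} with $S^{\rm T}=-S$, $K^\perp$ is then $S$-invariant. Since $(K^\perp)^\perp=K$, Corollary~\ref{factorization} with $U=K^\perp$ gives
\[
\chi(S;x)=\chi(S|_{K^\perp};x)\,\chi(-S|_{K};x).
\]
By Lemma~\ref{main po}, the first factor is $f_p(\Sigma;x)$. If $d=\dim K=\nullity_pW(\Sigma)$, then $\chi(-S|_K;x)=(-1)^d\chi(S|_K;-x)=\chi(S|_K;x)^\star$. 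Hence
\[
\chi(S;x)=f_p(\Sigma;x)\,\chi(S|_K;x)^\star .
\]
It therefore suffices to show that $\chi(S|_K;x)^\star=\sfp(\Phi_p(\Sigma;x))$.

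This identification is the main step, and it is immediate from the hypotheses. Condition~(ii) holds, so Lemma~\ref{divides} gives $\sfp(\Phi_p(\Sigma;x))\mid\chi(S|_K;x)$. Both polynomials are monic. The degree of $\chi(S|_K;x)$ is $\dim K=\nullity_pW(\Sigma)$, and this equals $\deg\sfp(\Phi_p(\Sigma;x))$ by hypothesis. Two monic polynomials of the same degree, one dividing the other, are equal, so $\chi(S|_K;x)=\sfp(\Phi_p(\Sigma;x))$.

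Applying $\star$ and using Lemma~\ref{factor-star}, which gives $\sfp(\Phi_p(\Sigma;x))^\star=\sfp(\Phi_p(\Sigma;x))$ under condition~(ii), we get
\[
\chi(S|_K;x)^\star=\sfp(\Phi_p(\Sigma;x)).
\]
Substituting into the factorization of $\chi(S;x)$ and dividing yields the stated formula for $f_p(\Sigma;x)$.

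The only points needing care are the sign convention $\chi(-T;x)=(-1)^d\chi(T;-x)$, so that the $\star$ operation appears correctly, and checking that Corollary~\ref{factorization} may be applied to $U=K^\perp$ (rather than to $U=K$) to obtain $f_p$ as the first factor. Alternatively, one can take $U=K$ directly: this gives $\chi(S;x)=\chi(S|_K;x)\,\chi(-S|_{K^\perp};x)$, so $\chi(S;x)=\sfp(\Phi_p)\,f_p^\star$. Applying $\star$ to the whole identity, using $\chi(S;x)^\star=\chi(S;x)$ from Eq.~\eqref{eq:chi-parity} and $\sfp(\Phi_p)^\star=\sfp(\Phi_p)$, gives the same conclusion.
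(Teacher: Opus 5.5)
Your proof is correct and uses the same ingredients as the paper: Lemmas~\ref{divides}, \ref{factor-star} and~\ref{main po}, together with Corollary~\ref{factorization}. The only difference is that you apply Corollary~\ref{factorization} to $U=K^\perp$, which puts $f_p(\Sigma;x)$ directly into the first factor and so does not need Eq.~\eqref{eq:chi-parity}; the paper instead takes $U=K$ and uses $\chi(S;x)^\star=\chi(S;x)$ to conclude $\chi(-S|_{K^\perp};x)=\chi(S|_{K^\perp};x)$, which is exactly your closing alternative.
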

\begin{proof}
  By the degree--nullity assumption, then $
\deg\operatorname{sfp}(\Phi_p(\Sigma;x))
=
\nullity_pW
=
\dim K.$
Since $\operatorname{sfp}(\Phi_p(\Sigma;x))$ divides
$\chi(S|_K;x)$, then we have $ \mathrm{sfp}(\Phi_p(\Sigma;x))=\chi\left(S|_K; x\right)$.  Lemma~\ref{factor-star} indicates
$\chi(S|_K;x)^\star=\chi(S|_K;x)$, while
Eq.~\eqref{eq:chi-parity} gives
$\chi(S;x)^\star=\chi(S;x)$. Corollary~\ref{factorization} yields
\[
\chi(S;x)=\chi(S|_K;x)\,\chi(-S|_{K^\perp};x).
\]
Applying the star operation gives
\[
\chi(S;x)=\chi(S|_K;x)\,\chi(-S|_{K^\perp};x)^\star.
\]
 Thus, we obtain $\chi(-S|_{K^\perp};x)=\chi(-S|_{K^\perp};x)^{\star}=\chi(S|_{K^\perp};x)$.
Therefore, \[\chi(S;x) = \chi(S|_K;x)\, \chi(S|_{K^\perp};x).\]
Finally, Lemma~\ref{main po} gives
$ f_p(\Sigma;x)=\chi(S|_{K^\perp};x)$, and hence
\[
f_p(\Sigma;x)=\frac{\chi(S;x)}{\sfp(\Phi_p(\Sigma;x))}.
\]
\end{proof}

   The next consequence is the key generalized cospectral invariance needed in the proof of the main theorem.

\begin{corollary}\label{co invariant}
Let $\Sigma\in\mathcal F_n$ satisfy the assumptions of Theorem~\ref{thm:main}, and let $\Delta$ be generalized skew cospectral with $\Sigma$. If $p$ is an odd prime divisor of $d_n(W(\Sigma))$, then $\nullity_pW(\Delta)=\nullity_pW(\Sigma)$ and $f_p(\Delta; x)=f_p(\Sigma; x).$
\end{corollary}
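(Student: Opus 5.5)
The plan is to transfer every hypothesis of Lemma~\ref{mp} from $\Sigma$ to $\Delta$ using only generalized skew cospectral invariants, and then apply Lemma~\ref{mp} to both graphs. First, since $\chi(S(\Delta);x)=\chi(S(\Sigma);x)$ and $\chi(S(\Delta)+J;x)=\chi(S(\Sigma)+J;x)$ over $\mathbb Z$, reduction mod $p$ gives $\Phi_p(\Delta;x)=\Phi_p(\Sigma;x)$, and hence $\sfp(\Phi_p(\Delta;x))=\sfp(\Phi_p(\Sigma;x))=\varphi_1\cdots\varphi_s$. Condition~(ii) refers only to this factorization, so it holds verbatim for $\Delta$.

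Second, I compare nullities. By Lemma~\ref{lem:Q-existence}, $Q^{\rm T}W(\Sigma)=W(\Delta)$ with $Q$ orthogonal. Hence $|\det W(\Delta)|=|\det W(\Sigma)|\neq 0$, so $\Delta$ is also controllable. Since $p\mid d_n(W(\Sigma))$ and $d_n$ is square-free, the same counting as in the proof of Theorem~\ref{lem:prime-exclusion} applies: each invariant factor of $W(\Sigma)$ contains $p$ at most once, and the number of them divisible by $p$ equals $\nullity_pW(\Sigma)$. This gives $v_p(\det W(\Sigma))=\nullity_pW(\Sigma)$. For $\Delta$ we only have the general inequality $\nullity_pW(\Delta)\le v_p(\det W(\Delta))$ (the number of invariant factors divisible by $p$ is at most the total $p$-adic valuation). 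Combining these,
\[
\nullity_pW(\Delta)\le v_p(\det W(\Delta))=v_p(\det W(\Sigma))=\nullity_pW(\Sigma)=\deg\sfp(\Phi_p(\Sigma;x)),
\]
where the last equality is condition~(i).

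For the reverse inequality I apply Corollary~\ref{co compare} to $\Delta$. Its proof uses only Lemmas~\ref{WS}, \ref{divides} and~\ref{factor-star}, which hold for any oriented graph satisfying condition~(ii), so it gives
\[
\deg\sfp(\Phi_p(\Delta;x))\le\nullity_pW(\Delta).
\]
Since $\sfp(\Phi_p(\Delta;x))=\sfp(\Phi_p(\Sigma;x))$, the two inequalities together give $\nullity_pW(\Delta)=\nullity_pW(\Sigma)=\deg\sfp(\Phi_p(\Delta;x))$. So condition~(i) also holds for $\Delta$.

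Finally, Lemma~\ref{mp} applies to both $\Sigma$ and $\Delta$, giving
\[
f_p(\Delta;x)=\frac{\chi(S(\Delta);x)}{\sfp(\Phi_p(\Delta;x))}=\frac{\chi(S(\Sigma);x)}{\sfp(\Phi_p(\Sigma;x))}=f_p(\Sigma;x).
\]
The main obstacle is the middle step: one must check that Lemma~\ref{divides} and Corollary~\ref{co compare} are legitimately applicable to $\Delta$, which is an arbitrary cospectral mate rather than the given graph. This works because condition~(ii) transfers through $\Phi_p$, and because the valuation bound supplies the upper inequality on $\nullity_pW(\Delta)$ without requiring any structural information about $\Delta$.
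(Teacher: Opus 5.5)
Your proposal is correct and follows the paper's proof essentially step for step. Condition~(ii) transfers to $\Delta$ because $\Phi_p(\Delta;x)=\Phi_p(\Sigma;x)$. Then $\nullity_pW(\Delta)$ is bounded below by $\deg\sfp(\Phi_p(\Delta;x))$, via Corollary~\ref{co compare} applied to $\Delta$, and bounded above by $v_p(\det W(\Delta))=v_p(\det W(\Sigma))=\nullity_pW(\Sigma)$, using $|\det W(\Delta)|=|\det W(\Sigma)|$ and the square-freeness of $d_n$; finally Lemma~\ref{mp} is applied to both graphs, and the only differences from the paper (proving the upper bound first, and noting that $\Delta$ is controllable) are cosmetic.
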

\begin{proof}
    Let $k=\operatorname{nullity}_p W(\Sigma)$. Since $\Sigma$ and $\Delta$ are generalized
skew cospectral, we have
$\Phi_p(\Delta;x)=\Phi_p(\Sigma;x).$ Hence condition~\textup{(ii)} for the fixed prime $p$ is inherited by $\Delta$. Using condition~\textup{(i)} for $\Sigma$ and Corollary~\ref{co compare}
applied to $\Delta$, we obtain
    \[k=\deg\operatorname{sfp}(\Phi_p(\Sigma; x))=\deg\operatorname{sfp}(\Phi_p(\Delta; x)) \leq \operatorname{nullity}_p W(\Delta).\]

 On the other hand, since $d_n(W(\Sigma))$ is square-free, exactly $k$ invariant factors of $W(\Sigma)$ are
divisible by $p$. Thus
$v_p(\det W(\Sigma))=k$.
 It follows from Lemma \ref{lem:Q-existence} that $|\det W(\Delta)|=|\det W(\Sigma)|$,  which implies that
$v_p(\det W(\Delta))=k$. Thus, we know that $\operatorname{nullity}_p W(\Delta)\le k$.
Therefore, one indicates that \[\deg\operatorname{sfp}(\Phi_p(\Delta; x))=k=\operatorname{nullity}_p W(\Delta),\] and $\nullity_pW(\Delta)=\nullity_pW(\Sigma)$.

Therefore, we obtain that  $\Delta$ satisfies all conditions of Theorem~\ref{thm:main}. By Lemma~\ref{mp}, we have
\[
f_p(\Delta;x)
=
\frac{\chi(S(\Delta);x)}{\sfp(\Phi_p(\Delta;x))}
=
\frac{\chi(S(\Sigma);x)}{\sfp(\Phi_p(\Sigma;x))}
=
f_p(\Sigma;x).
\]
 The assertion follows.
   \end{proof}

Next, we exclude each odd prime satisfying the hypotheses of Theorem~\ref{thm:main} from the level of a matrix in $\mathcal{Q}(\Sigma)$.

\begin{proposition}\label{prop:exclude-p}
Let $\Sigma\in\mathcal F_n$ satisfy the assumptions of Theorem~\ref{thm:main}, let $Q\in\mathcal Q(\Sigma)$ have level $\ell(Q)$, and let $p$ be an odd prime divisor of $d_n(W(\Sigma))$. Then $p\nmid\ell(Q)$.
\end{proposition}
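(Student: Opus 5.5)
The plan is to apply the prime-exclusion theorem (Theorem~\ref{lem:prime-exclusion}) directly. I fix $Q\in\mathcal Q(\Sigma)$ and let $\Delta\in\mathcal C(\Sigma)$ be the oriented graph with $Q^{\rm T}S(\Sigma)Q=S(\Delta)$. Such a $\Delta$ exists by the definition of $\mathcal Q(\Sigma)$. The level $\ell(Q)$ depends only on $Q$, so it suffices to check the two hypotheses of Theorem~\ref{lem:prime-exclusion} for the pair $(\Sigma,\Delta)$ and the prime $p$.

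The first hypothesis, $p^2\nmid d_n(W(\Sigma))$, is immediate. The assumptions of Theorem~\ref{thm:main} say that $d_n(W(\Sigma))$ is square-free, and $p$ is one of its prime divisors.

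The second hypothesis, $f_p(\Sigma;x)=f_p(\Delta;x)$, is exactly the content of Corollary~\ref{co invariant}. That corollary applies because $\Sigma$ satisfies the assumptions of Theorem~\ref{thm:main}, $\Delta$ is generalized skew cospectral with $\Sigma$, and $p$ is an odd prime divisor of $d_n(W(\Sigma))$. It also gives $\nullity_pW(\Delta)=\nullity_pW(\Sigma)$, which is consistent with the degree count used inside the proof of Theorem~\ref{lem:prime-exclusion}. With both hypotheses verified, Theorem~\ref{lem:prime-exclusion} yields $p\nmid\ell(Q)$.

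There is no real obstacle left at this stage; the substantive work is already in Corollary~\ref{co invariant} and Lemma~\ref{mp}. The only point to double-check is that Corollary~\ref{co invariant} needs condition~(ii) at $p$ only through $\Phi_p(\Delta;x)=\Phi_p(\Sigma;x)$, and this equality is a generalized skew cospectral invariant. The conditions for this fixed $p$ therefore transfer from $\Sigma$ to $\Delta$.
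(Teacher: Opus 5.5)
Your proposal is correct and matches the paper's proof: you choose $\Delta$ with $Q^{\rm T}S(\Sigma)Q=S(\Delta)$, get $f_p(\Sigma;x)=f_p(\Delta;x)$ from Corollary~\ref{co invariant}, and get $p^2\nmid d_n(W(\Sigma))$ from square-freeness. Theorem~\ref{lem:prime-exclusion} then gives $p\nmid\ell(Q)$.
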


\begin{proof}
Let $Q\in\mathcal Q(\Sigma)$, and let $\Delta$ be generalized skew cospectral with $\Sigma$ satisfying $Q^{\rm T}S(\Sigma)Q=S(\Delta)$. Thus, Corollary~\ref{co invariant} gives $
f_p(\Sigma;x)=f_p(\Delta;x).$
Since $d_n$ is square-free,
$p^2\nmid d_n(W(\Sigma))$. By  Prime-exclusion theorem~\ref{lem:prime-exclusion},  we obtain $p \nmid \ell(Q)$.
\end{proof}
We are now in a position to prove Theorem~\ref{thm:main}.

\begin{proof}[Proof of Theorem~\ref{thm:main}]
Let $Q\in\mathcal{Q}(\Sigma)$ and let $\ell=\ell(Q)$. By Lemma~\ref{l dn},
one has $\ell\mid d_n(W)$. Proposition~\ref{prop:exclude-two} shows that $\ell$ is odd. On the contrary, suppose that $\ell>1$, and let $p$ be an odd prime divisor of $\ell$, then $p\mid d_n$. Therefore, the assumptions of the Theorem~\ref{thm:main}  apply to $p$, and Proposition~\ref{prop:exclude-p} gives $p\nmid\ell$; a contradiction. Thus $\ell=1$ for every $Q\in\mathcal{Q}(\Sigma)$. Corollary~\ref{l=1} implies that $\Sigma$ is $\mathrm{DGSS}$.
\end{proof}

\begin{corollary}\label{cor:self-converse}
If $\Sigma\in \mathcal{F}_n$ satisfies the assumptions of Theorem~\ref{thm:main}, then $\Sigma$ is self-converse.
\end{corollary}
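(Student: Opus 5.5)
The plan is to derive self-converseness directly from the DGSS conclusion of Theorem~\ref{thm:main}. Let $\Sigma^{\rm T}$ be the converse of $\Sigma$, so that $S(\Sigma^{\rm T})=-S(\Sigma)=S(\Sigma)^{\rm T}$. First I will check that $\Sigma^{\rm T}$ has the same generalized skew spectrum as $\Sigma$.

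For the ordinary characteristic polynomial this is Eq.~\eqref{eq:chi-parity}: since $S^{\rm T}=-S$ and transposition preserves determinants,
\[
\chi(-S;x)=\det(xI+S)=\det\bigl((xI-S)^{\rm T}\bigr)=\chi(S;x).
\]
For the second polynomial, $J$ is symmetric, so $-S+J=(S+J)^{\rm T}$. Hence
\[
\chi(S(\Sigma^{\rm T})+J;x)=\det\bigl(xI-(S+J)^{\rm T}\bigr)=\chi(S+J;x).
\]
Alternatively, one can use Eq.~\eqref{eq:rank-one-identity} together with $e^{\rm T}\operatorname{adj}(xI+S)e=e^{\rm T}\operatorname{adj}(xI-S)^{\rm T}e=h(x)$. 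Either way, $\Sigma^{\rm T}\in\mathcal C(\Sigma)$.

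Next I apply Theorem~\ref{thm:main}, whose hypotheses are exactly the assumptions here, to conclude that $\Sigma$ is DGSS. By definition, every oriented graph with the same generalized skew spectrum as $\Sigma$ is isomorphic to $\Sigma$. In particular $\Sigma^{\rm T}\cong\Sigma$, so $\Sigma$ is self-converse.

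There is no real obstacle; the only care needed is the verification $\chi(-S+J;x)=\chi(S+J;x)$, which rests on the symmetry of $J$. It is also worth remarking that, unlike Theorem~\ref{thm:old}, self-converseness is never used as an input anywhere in the proof of Theorem~\ref{thm:main}. Propositions~\ref{prop:exclude-two} and~\ref{prop:exclude-p} use only the rank, nullity and factor conditions. So the corollary is a genuine consequence and not circular.
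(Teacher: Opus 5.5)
Your proposal is correct and follows the paper's proof: the converse $\Sigma^{\rm T}$ has the same generalized skew spectrum as $\Sigma$, and the DGSS conclusion of Theorem~\ref{thm:main} then forces $\Sigma^{\rm T}\cong\Sigma$. You also check the cospectrality explicitly via $-S+J=(S+J)^{\rm T}$, where the paper relies on its introduction for this. Your remark is accurate that self-converseness is never used as an input in the proof of Theorem~\ref{thm:main}, so the argument is not circular.
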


\begin{proof}
The converse $\Sigma^{\rm T}$ always has the same generalized skew spectrum as $\Sigma$. By Theorem~\ref{thm:main}, it must be isomorphic to $\Sigma$.
\end{proof}

\begin{lemma}\label{lem:self-converse-kernel}
Suppose that $\Sigma\in\mathcal{F}_n$ is self-converse. Then $
\chi(S|_K;x)^\star=\chi(S|_K;x).$
\end{lemma}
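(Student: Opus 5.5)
Self-converseness gives a permutation matrix $P$ with $P^{\rm T}SP=-S$, where $S=S(\Sigma)$. From this I plan to build a linear isomorphism of $\mathbb{F}_p^n$ that carries $K=\Ker_p(W^{\rm T})$ onto itself and intertwines $S|_K$ with $-S|_K$. Once that is in place, $\chi(S|_K;x)=\chi(-S|_K;x)$. Since $\dim K=\deg\chi(S|_K;x)$ and $\chi(-S|_K;x)=(-1)^{\dim K}\chi(S|_K;-x)=\chi(S|_K;x)^\star$, the claim follows.

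\textbf{Step 1: $P$ preserves $K$.} Write $SP=-PS$. Since $P$ is a permutation matrix, $Pe=e$ and $P^{\rm T}e=e$. An easy induction then gives $S^kP=(-1)^kPS^k$, and hence $P^{\rm T}S^kP=(-S)^k$, which implies $P^{\rm T}S^ke=(-1)^kS^ke$. In matrix form this reads
\[
P^{\rm T}W=WD,\qquad D=\diag(1,-1,1,\ldots,(-1)^{n-1}).
\]
Transposing, $W^{\rm T}P=DW^{\rm T}$. Now take $\xi\in K$. Then $W^{\rm T}P\xi=DW^{\rm T}\xi=0$, so $P\xi\in K$. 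Because $P$ is invertible and $K$ is finite-dimensional, $PK=K$.

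\textbf{Step 2: $P$ intertwines $S|_K$ with $-S|_K$.} Let $T=P|_K$, which is an invertible operator on $K$ by Step 1. For $\xi\in K$ we have $S(P\xi)=-P(S\xi)$. Using Lemma~\ref{WS}, $S$ maps $K$ into itself, so this identity says
\[
S|_K\circ T=T\circ(-S|_K)
\]
as operators on $K$. Therefore $S|_K$ and $-S|_K$ are similar, and their characteristic polynomials coincide.

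\textbf{Step 3: conclude.} By Step 2, $\chi(S|_K;x)=\chi(-S|_K;x)=(-1)^{\dim K}\chi(S|_K;-x)$. The right-hand side is $\chi(S|_K;x)^\star$ by the definition of $\star$, since $\deg\chi(S|_K;x)=\dim K$.

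The only step needing care is Step 1, specifically the sign bookkeeping that produces $P^{\rm T}W=WD$. One must confirm that $D$ is invertible modulo $p$ so that $P$ maps $K$ into $K$. This holds trivially because $D$ has entries $\pm1$. Note also that $p$ odd is not actually needed for this argument.
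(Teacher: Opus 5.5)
Your proof is correct and follows the paper's argument: use $SP=-PS$ and $Pe=P^{\rm T}e=e$ to show $PK=K$, then restrict $P^{\rm T}SP=-S$ to $K$ to get $S|_K$ similar to $-S|_K$. Your identity $W^{\rm T}P=DW^{\rm T}$ is just a matrix form of the paper's computation $e^{\rm T}S^jPv=(-1)^je^{\rm T}S^jv$, and you do not actually need invertibility of $D$ to get $PK\subseteq K$, since $D\cdot 0=0$.
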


\begin{proof}
Since $\Sigma$ is self-converse, then there exists a permutation matrix $P$ such that $
P^{\rm T}SP=-S.$
Then $Pe=e$ and $SP=-PS$. If $v\in K$, then
$e^{\mathrm T}S^j v=0$ for every $j\ge0$. Hence
\[
 e^{\mathrm T}S^jPv=(-1)^je^{\mathrm T}PS^jv=(-1)^je^{\mathrm T}S^jv=0,
\]
so $Pv\in K$. Thus $K$ is $P$- invariant. Restricting
$P^{\mathrm T}SP=-S$ to $K$ shows that $S|_K$ is similar to $-S|_K$, which indicates $
\chi(S|_K;x)^\star=\chi(S|_K;x).$
\end{proof}

\begin{corollary}\label{cor:nullity-one}
Let $\Sigma\in\mathcal{F}_n$ be self-converse and let $p$ be an odd prime. If $
\nullity_pW(\Sigma)=1,$
then $
\deg\sfp(\Phi_p(\Sigma;x))=1.$
Consequently, conditions~\textup{(i)} and~\textup{(ii)} of Theorem~\ref{thm:main} are automatically satisfied for any $p$.
\end{corollary}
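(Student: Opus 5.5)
Let $K=\Ker_p(W^{\rm T})$, so that $\dim K=\nullity_pW(\Sigma)=1$. The plan is to identify $\chi(S|_K;x)$ with $\sfp(\Phi_p(\Sigma;x))$ by means of Lemma~\ref{divides}, using its second alternative hypothesis instead of condition~(ii).

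First, since $\Sigma$ is self-converse, Lemma~\ref{lem:self-converse-kernel} gives $\chi(S|_K;x)^\star=\chi(S|_K;x)$. This is exactly the alternative hypothesis in Lemma~\ref{divides}, so both divisibilities hold:
\[
\sfp(\Phi_p(\Sigma;x))\mid\chi(S|_K;x)\mid\Phi_p(\Sigma;x).
\]
Since $\deg\chi(S|_K;x)=\dim K=1$, we get $\deg\sfp(\Phi_p(\Sigma;x))\le 1$. Conversely, $\chi(S|_K;x)$ is a nonconstant factor of $\Phi_p(\Sigma;x)$, so $\Phi_p(\Sigma;x)$ has at least one irreducible factor, and therefore its square-free part has degree at least $1$. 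Hence $\deg\sfp(\Phi_p(\Sigma;x))=1=\nullity_pW(\Sigma)$, which is condition~(i). In fact the argument gives more: $\sfp(\Phi_p(\Sigma;x))=\chi(S|_K;x)$ is linear and $\star$-invariant. A linear polynomial $x-a$ satisfies $(x-a)^\star=x+a$, so $\star$-invariance forces $a=-a$, and since $p$ is odd, $a=0$. Thus the square-free part equals $x$.

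For condition~(ii), the factorization consists of the single factor $\varphi_1$ (so $s=1$), and the requirement for $i\ne j$ is vacuous, as already noted in Remark~\ref{rem:vacuous}. This holds for every odd prime $p$ with $\nullity_pW(\Sigma)=1$, in particular for each odd prime divisor of $d_n$ at which the nullity is one.

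The only delicate point is avoiding circularity. Lemma~\ref{divides} must be invoked through its second alternative, the symmetry of $\chi(S|_K;x)$ supplied by self-converseness, and not through condition~(ii), which is one of the things being proved. The rest is degree counting.
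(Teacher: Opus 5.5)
Your proposal is correct and follows the paper's proof essentially step for step. Lemma~\ref{lem:self-converse-kernel} supplies the $\star$-invariance needed for the second alternative of Lemma~\ref{divides}; degree counting with $\dim K=1$ then gives condition~(i), and condition~(ii) is vacuous since $s=1$. Your additional observation that $\star$-invariance forces $\sfp(\Phi_p(\Sigma;x))=x$ (using that $p$ is odd) is also correct, though the statement does not need it.
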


\begin{proof}
Since $\Sigma$ is self-converse, by Lemma~\ref{lem:self-converse-kernel}, we have $\chi(S|_K;x)^\star=\chi(S|_K;x)$. Hence, Lemma~\ref{divides} gives $\sfp(\Phi_p(\Sigma;x))\mid\chi(S|_K;x)$ and $\Phi_p(\Sigma;x)$ is nonconstant. Since $\dim K=\nullity_p W(\Sigma)=1$, it follows that $\deg\sfp(\Phi_p(\Sigma;x))=\nullity_p W(\Sigma)=1$. Therefore, condition~\textup{(i)} holds, while condition~\textup{(ii)} is automatic by Remark~\ref{rem:vacuous}.
\end{proof}
\begin{remark}
    Theorem~\ref{thm:old} is recovered as a special case of Theorem~\ref{thm:main}.  Indeed, under the assumptions of Theorem~\ref{thm:old}, the Smith normal form of $W(\Sigma)$ has the form
\[
\diag(
\underbrace{1,\ldots,1}_{\left\lceil n/2\right\rceil},
\underbrace{2,\ldots,2,2b}_{\left\lfloor n/2\right\rfloor}),
\]
where $b$ is an odd square-free integer; see \cite{QiuWangWang}.
Thus $d_n=2b$ is square-free, and $\nullity_pW=1$ for every odd prime divisor $p$ of $b$. Since self-converse is already assumed in Theorem~\ref{thm:old}, Corollary~\ref{cor:nullity-one} verifies conditions~\textup{(i)} and \textup{(ii)} of Theorem~\ref{thm:main}.
\end{remark}

\section{Some examples}\label{sec:examples}
In this section, we give some examples for illustrations.

\begin{example}
Let $\Sigma$ be an oriented graph of order $n=10$. The skew-adjacency matrix $S(\Sigma)$ of
$\Sigma$ is given as follows:
\[
S(\Sigma)=\begin{pmatrix}
0 & 0 & 0 & 1 & -1 & 1 & 1 & 0 & -1 & 0 \\
0 & 0 & 1 & 0 & -1 & 1 & 0 & -1 & 0 & 1 \\
0 & -1 & 0 & -1 & 0 & 0 & -1 & 0 & -1 & 1 \\
-1 & 0 & 1 & 0 & 1 & -1 & 0 & -1 & 1 & 0 \\
1 & 1 & 0 & -1 & 0 & 0 & 1 & 1 & 0 & -1 \\
-1 & -1 & 0 & 1 & 0 & 0 & 0 & 0 & -1 & 1 \\
-1 & 0 & 1 & 0 & -1 & 0 & 0 & -1 & 0 & 1 \\
0 & 1 & 0 & 1 & -1 & 0 & 1 & 0 & 1 & 0 \\
1 & 0 & 1 & -1 & 0 & 1 & 0 & -1 & 0 & -1 \\
0 & -1 & -1 & 0 & 1 & -1 & -1 & 0 & 1 & 0
\end{pmatrix}.
\]
It can be computed that the $\mathrm{SNF}$ of $W(\Sigma)$ is
       \[
           \mathrm{diag} \left ( 1,1,1,1,1,2,2,2,2\times7\times13\times743,2\times7\times13\times743 \right ).
       \]
Thus, $
 d_{10}=2\times7\times13\times743$
is square-free and $\rank_2W=5$.  By calculating, we know that
\[
\begin{array}{c|c}
p & \sfp(\Phi_p(\Sigma;x)) \\ \hline
7   & x^2+2 \\
13  & x^2+7 \\
743 & x^2+342
\end{array}
\]
 It is clear that for odd prime $p$ of $d_n$, we have $\deg\big(\operatorname{sfp}(\Phi_p(\Sigma;x))\big) = \operatorname{nullity}_p W(\Sigma)=2$. Moreover, each quadratic polynomial in the table is irreducible over the corresponding field.  Therefore, it follows from Theorem~\ref{thm:main} that the $\Sigma$ is $\mathrm{DGSS}$.

 On the other hand, Theorem~\ref{thm:old} fails to determine whether this example is $\mathrm{DGSS}$, because $
 2^{-5}|\det W(\Sigma)|=(7\times13\times743)^2$
is not square-free.
\end{example}
The following example shows that condition~\textup{(ii)} cannot in general be omitted from Theorem~\ref{thm:main}.
\begin{example}\label{exam 2}
    Let $\Sigma$ be an oriented graph of order $n = 7$. The skew-adjacency matrix $S(\Sigma)$ of
$\Sigma$ is given as follows:
    \[
S(\Sigma)=\begin{pmatrix}
0 & -1 & 1 & -1 & 0 & -1 & 1 \\
1 & 0 & 0 & 0 & 0 & 0 & 1 \\
-1 & 0 & 0 & -1 & 0 & 1 & 0 \\
1 & 0 & 1 & 0 & 1 & -1 & -1 \\
0 & 0 & 0 & -1 & 0 & 0 & -1 \\
1 & 0 & -1 & 1 & 0 & 0 & 0 \\
-1 & -1 & 0 & 1 & 1 & 0 & 0
\end{pmatrix}.
\]

It can be computed that the $\mathrm{SNF}$ of $W(\Sigma)$ is
       \[
           \mathrm{diag} \left ( 1,1,1,1,2,2\times5\times23,2\times5\times23 \right ).
       \]
       Hence $d_7=2\times5\times23$ is square-free and $\rank_2W=4$.  For $p=23$, we have $\sfp(\Phi_{23}(\Sigma;x))=x^2-11$ and $\nullity_{23}W(\Sigma)=2$. Thus, condition~\textup{(i)} holds. Moreover, since $x^2-11$ is
irreducible over $\mathbb{F}_{23}$, condition~\textup{(ii)} is
automatically satisfied. For $p=5$, $\operatorname{sfp}(\Phi_p(\Sigma; x))=(1+x) (4+x)$ and $\nullity_5W(\Sigma)=2$. Hence, condition~\textup{(i)} holds, whereas
condition~\textup{(ii)} fails. Consequently, among the hypotheses of Theorem~\ref{thm:main}, only condition~\textup{(ii)} for $p=5$ fails.

Consider
\[
Q_1=\frac{1}{5}\begin{pmatrix}
1 & 1 & 3 & 3 & -2 & -1 & 0 \\
3 & 3 & -1 & -1 & -1 & 2 & 0 \\
-3 & 2 & 1 & 1 & 1 & 3 & 0 \\
1 & 1 & 3 & -2 & 3 & -1 & 0 \\
2 & -3 & 1 & 1 & 1 & 3 & 0 \\
0 & 0 & 0 & 0 & 0 & 0 & 5 \\
1 & 1 & -2 & 3 & 3 & -1 & 0
\end{pmatrix}.
\]
A direct computation shows that $
Q_1^{\rm T}Q_1=I,$
$Q_1e=e$, and
$\ell(Q_1)=5.$
Moreover,
\[
S(\Delta)=Q_1^{\rm T}S(\Sigma)Q_1=
\begin{pmatrix}
0 & 0 & 1 & 1 & 0 & 0 & -1 \\
0 & 0 & 0 & 1 & 1 & 0 & 0 \\
-1 & 0 & 0 & 1 & -1 & 1 & -1 \\
-1 & -1 & -1 & 0 & 1 & 0 & 0 \\
0 & -1 & 1 & -1 & 0 & 1 & 0 \\
0 & 0 & -1 & 0 & -1 & 0 & 1 \\
1 & 0 & 1 & 0 & 0 & -1 & 0
\end{pmatrix}.
\]
Thus $\Delta$ is generalized skew cospectral with $\Sigma$. The out-degree sequences of $\Sigma$ and $\Delta$ are $
(0,1,2,2,2,2,3)$ and
$(1,1,2,2,2,2,2),$  respectively,
so the two oriented graphs are not isomorphic. Hence, $\Sigma$ is not $\mathrm{DGSS}$.

This indicates that condition~\textup{(ii)} cannot in general be omitted from the stated criterion, even if all the remaining assumptions of Theorem~\ref{thm:main} are satisfied.
\end{example}
As observed in \cite{WangWangZhu}, simple graphs satisfying the condition of Theorem~\ref{wang} are determined by the $\mathrm{SNF}$ of their walk matrices. For oriented graphs, by contrast, no analogous result holds. By computing $\Delta$ in Example~\ref{exam 2}, we find that the $\mathrm{SNF}$ of $W(\Delta)$ is
\[ \mathrm{diag} \left ( 1,1,1,1,2,2\times5\times23,2\times5\times23 \right ).\]
However, $\Sigma$ is not isomorphic to $\Delta$.

\section{Conclusion and Future Work}\label{sec:conclusion}
In this paper, we established a new sufficient condition for a controllable oriented graph to be determined by its generalized skew spectrum when the last invariant factor of its skew-walk matrix is square-free. By treating the prime $2$ and the odd primes separately, we showed that generalized skew-cospectral oriented graphs have the same $p$-main polynomial under the conditions of Theorem~\ref{thm:main}, which allows every prime divisor of the level of a regular rational orthogonal matrix to be excluded. The resulting criterion extends the previous square-free determinant criterion to cases with higher $p$-nullity, as illustrated by the examples.

At the end of this paper, we mention a natural problem suggested by our computational experiments. In all examples we have examined, the square-freeness of the last invariant factor $d_n$ of $W(\Sigma)$ seems to imply $
\rank_2 W(\Sigma)=\left\lceil\frac n2\right\rceil.$
This leads to the following question.
\begin{problem}
Let $\Sigma\in \mathcal{F}_n$, and let $d_n$ be
the last invariant factor of $W(\Sigma)$. Suppose that $d_n$ is square-free, then $
 \rank_2 W(\Sigma)=\left\lceil\frac n2\right\rceil.$
\end{problem}
An affirmative answer would make the \(\mathbb{F}_2\)-rank condition for \(W(\Sigma)\) in Theorem~\ref{thm:main} redundant and reveal a closer connection between the Smith normal form of $W(\Sigma)$ and its rank over $\mathbb{F}_2$. We leave this problem for future study.

\end{document}